%% 
%% Copyright 2007-2020 Elsevier Ltd
%% 
%% This file is part of the 'Elsarticle Bundle'.
%% ---------------------------------------------
%% 
%% It may be distributed under the conditions of the LaTeX Project Public
%% License, either version 1.2 of this license or (at your option) any
%% later version.  The latest version of this license is in
%%    http://www.latex-project.org/lppl.txt
%% and version 1.2 or later is part of all distributions of LaTeX
%% version 1999/12/01 or later.
%% 
%% The list of all files belonging to the 'Elsarticle Bundle' is
%% given in the file `manifest.txt'.
%% 
%% Template article for Elsevier's document class `elsarticle'
%% with harvard style bibliographic references

\documentclass[preprint,12pt]{elsarticle}

%% Use the option review to obtain double line spacing
%% \documentclass[preprint,review,12pt]{elsarticle}

%% Use the options 1p,twocolumn; 3p; 3p,twocolumn; 5p; or 5p,twocolumn
%% for a journal layout:
%% \documentclass[final,1p,times]{elsarticle}
%% \documentclass[final,1p,times,twocolumn]{elsarticle}
%% \documentclass[final,3p,times]{elsarticle}
%% \documentclass[final,3p,times,twocolumn]{elsarticle}
%% \documentclass[final,5p,times]{elsarticle}
%% \documentclass[final,5p,times,twocolumn]{elsarticle}

%% For including figures, graphicx.sty has been loaded in
%% elsarticle.cls. If you prefer to use the old commands
%% please give \usepackage{epsfig}

%% The amssymb package provides various useful mathematical symbols
\usepackage{amssymb}
\usepackage{amsmath}
\usepackage{amsthm}

\newtheorem{theorem}{Theorem}
\newtheorem{lemma}{Lemma}

\newtheorem{corol}{Corollary}
%% The amsthm package provides extended theorem environments
%% \usepackage{amsthm}

%% The lineno packages adds line numbers. Start line numbering with
%% \begin{linenumbers}, end it with \end{linenumbers}. Or switch it on
%% for the whole article with \linenumbers.
%% \usepackage{lineno}

%\journal{Applied Mathematics and Computation}

\begin{document}

\begin{frontmatter}

%% Title, authors and addresses

%% use the tnoteref command within \title for footnotes;
%% use the tnotetext command for theassociated footnote;
%% use the fnref command within \author or \address for footnotes;
%% use the fntext command for theassociated footnote;
%% use the corref command within \author for corresponding author footnotes;
%% use the cortext command for theassociated footnote;
%% use the ead command for the email address,
%% and the form \ead[url] for the home page:
%% \title{Title\tnoteref{label1}}
%% \tnotetext[label1]{}
%% \author{Name\corref{cor1}\fnref{label2}}
%% \ead{email address}
%% \ead[url]{home page}
%% \fntext[label2]{}
%% \cortext[cor1]{}
%% \affiliation{organization={},
%%             addressline={},
%%             city={},
%%             postcode={},
%%             state={},
%%             country={}}
%% \fntext[label3]{}

\title{Mathematical modeling of heat process in a cylindrical domain with nonlinear thermal coefficients and a heat source on the axis}

\author[a,b]{Julieta Bollati}

\author[a,b]{Adriana C. Briozzo\corref{cor1}}

\author[c,d]{Stanislav N. Kharin}

\author[c,d,e]{Targyn A. Nauryz}

\address[a]{Depto. Matem\'atica, FCE, Univ. Austral, Paraguay 1950, Rosario, Argentina}

\address[b]{CONICET, Buenos Aires, Argentina}

\address[c]{Kazakh-British Technical University, Almaty, Kazakhstan}

\address[d]{Institute of Mathematics and Mathematical Modeling, Almaty, Kazakhstan}

\address[e]{Narxoz University, Almaty, Kazakhstan}

\cortext[cor1]{abriozzo@austral.edu.ar, (+54) 341 5223000}

%% use optional labels to link authors explicitly to addresses:
%% \author[label1,label2]{}
%% \affiliation[label1]{organization={},
%%             addressline={},
%%             city={},
%%             postcode={},
%%             state={},
%%             country={}}
%%
%% \affiliation[label2]{organization={},
%%             addressline={},
%%             city={},
%%             postcode={},
%%             state={},
%%             country={}}

\author{}

%\affiliation{organization={},%Department and Organization
%            addressline={}, 
%            city={},
%            postcode={}, 
%            state={},
%            country={}}

\begin{abstract}
A mathematical model of the heat process in one-dimensional  domain governed by a cylindrical heat equation with a heat source on the axis $z=0$ and nonlinear thermal coefficients is considered. The developed model is particularly applicable for analyzing temperature variations on electrical contact surfaces, where precise thermal management is crucial for ensuring optimal performance and preventing overheating. To solve the mathematical model, we employ a solution method based on similarity transformations. This technique allows us to reduce the problem to an ordinary differential problem, which is equivalent to a nonlinear integral equations system. To ensure the existence and uniqueness of the solution, we employ the fixed point theory in a Banach space, providing a rigorous mathematical foundation for our analysis.

\end{abstract}

%%Graphical abstract
%\begin{graphicalabstract}
%\includegraphics{grabs}
%\end{graphicalabstract}

%%Research highlights
%\begin{highlights}
%\item Research highlight 1
%\item Research highlight 2
%\end{highlights}

\begin{keyword}
Stefan problem \sep one-dimensional cylindrical heat equation \sep heat source \sep nonlinear thermal coefficients \sep Similarity transformation \sep nonlinear integral equation\sep fixed point theorem
%% keywords here, in the form: keyword \sep keyword

%% PACS codes here, in the form: \PACS code \sep code

%% MSC codes here, in the form: \MSC code \sep code
%% or \MSC[2008] code \sep code (2000 is the default)

\end{keyword}

\end{frontmatter}

%% \linenumbers

%% main text
\section{Introduction}
Free boundary problems have garnered significant attention from researchers and have transformed into a rapidly growing field of study. These problems serve as valuable models for understanding both natural and industrial phase change processes, including phenomena like ice melting, liquid freezing, and oxygen diffusion.

Stefan problems refers to a class of free boundary problems that model how
materials undergo phase transitions. The phases are assumed to be separated by a boundary called the interface.  
The phase change triggers the motion of the boundary, which evolves in time. 
Mathematically, Stefan problems are governed by partial differential equations for the temperature in each phase, initial conditions and boundary conditions. One special boundary condition imposed at the interface is the so called Stefan condition, which models
the absorption or liberation of latent heat during the phase change. The Stefan
problems are highly nonlinear \citep{AlSo1993,Ca1984,CaJa1959,Cr1984,Gu2017,Lu1991,Ru1971}.

Historically, the classical Stefan problem assumed the constancy of thermal coefficients such as thermal conductivity and specific heat. However, recent advances and ongoing research advocate for more realistic models that consider temperature-dependent thermal parameters. By incorporating temperature-dependent thermal parameters, these models offer a more accurate representation of phase change phenomena and contribute to a deeper understanding of the associated physics \cite{BoBr2021,BrNaTa2007,KuSiRa2020A,SiKuRa2019A,KhNa2021-A,NaKh2022}.

The analysis of temperature variations and thermal management is of utmost importance in various fields, particularly in applications involving electrical contact surfaces. Efficient thermal management is crucial for ensuring optimal performance, preventing overheating, and prolonging the lifespan of electrical components. To address these challenges, it is essential to develop accurate mathematical models that capture the heat transfer processes in such systems \cite{BoBrNaKh2024,KhNaMi2020,KhNa2021,Kh2017,KhNoDa2003,NaBr2023,SaErNaNo2018,SaKh2014,KhSarKasNau2018,KhanNau2021}.

The aim of this study is to investigate the heat transfer in a one-dimensional cylindrical domain, focusing on the thermal behavior of electrical contact surfaces. By considering the cylindrical heat equation with nonlinear thermal coefficients, we can accurately model the complex interactions between heat generation, heat conduction, and the nonlinear behavior of the material properties.

The main objective of this research is to develop a mathematical model that accurately describes the heat transfer process in the one-dimensional cylindrical domain with a heat source on the axis. Specifically, we aim to:
\begin{enumerate}
    \item Incorporate the effects of nonlinear thermal coefficients to capture the material behavior more accurately,
    \item Utilize a powerful solution method based on similarity transformations to reduce the problem to a set of ordinary differential equations,
    \item Employ fixed point theory in a Banach space to ensure the existence and uniqueness of the solution.
\end{enumerate}

To achieve our objectives, we will employ a combination of mathematical modeling, solution methods, and rigorous analysis. We will begin by formulating the mathematical model based on a cylindrical heat equation, accounting for the nonlinear thermal coefficients and the presence of a heat source on the axis. Subsequently, we will use similarity transformations to transform the Stefan problem into an ordinary differential problem, which turns out to be equivalent to a system of nonlinear integral equations.

To ensure the existence and uniqueness of the solution, we will employ fixed point theory in a Banach space. This rigorous mathematical foundation will provide the necessary theoretical framework to analyze the behavior of the system and validate the obtained results.

By Holm model in electrical contact surface if heat source power $Q_0>0$ effects on the fixed axis $z=0$ then electrical contact spot (ideal Holm's sphere) with small radius $r<10^{-4}$ m starts melting and then arise liquid and solid phases of the material \cite{Ho1981}, see Figure \ref{fig1}.
\begin{figure}\label{fig1}
\centering
\includegraphics[width=5.5 cm]{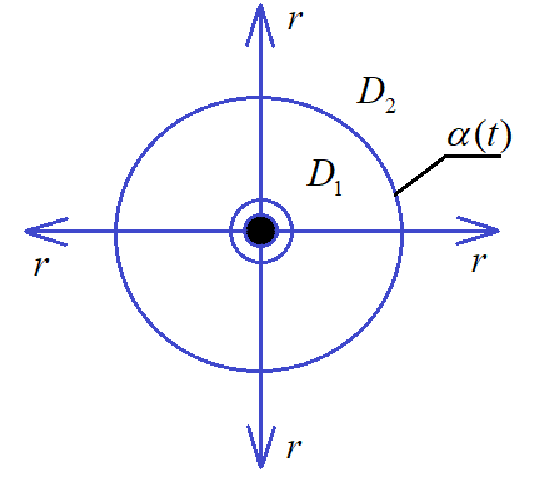}
\caption{Electical contact surface. $D_1 (0<z<\alpha(t))$-liquid phase, $D_2(\alpha(t)<z<\infty$)-solid phase of the material.\label{fig1}}
\end{figure} 
Temperature distribution in liquid and solid zones are described with one-dimensional cylindrical heat equation
\begin{equation}\label{e1}
    c(\theta_1)\gamma(\theta_1)\frac{\partial\theta_1}{\partial t}=\dfrac{1}{r}\dfrac{\partial}{\partial r}\left[\lambda(\theta_1)r\dfrac{\partial\theta_1}{\partial r}\right],\quad r\in D_1,\quad t>0,
\end{equation}
\begin{equation}\label{e2}
    c(\theta_2)\gamma(\theta_2)\dfrac{\partial\theta_2}{\partial t}=\dfrac{1}{r}\dfrac{\partial}{\partial r}\left[\lambda(\theta_2)r\dfrac{\partial\theta_2}{\partial r}\right],\quad r\in D_2,\quad t>0,
\end{equation}
\begin{equation}\label{e3}
    \lim_{r\to 0}\left(-2\pi r\lambda(\theta_1)\dfrac{\partial\theta_1}{\partial r}\right)=Q_0,
\end{equation}
\begin{equation}\label{e4}
    \theta_1(\alpha(t),t)=\theta_2(\alpha(t),t)=\theta_m>0,\quad t>0,
\end{equation}
\begin{equation}\label{e5}
    -\lambda(\theta_1(\alpha(t),t))\dfrac{\partial\theta_1}{\partial r}\bigg|_{r=\alpha(t)}+\lambda(\theta_2(\alpha(t),t))\dfrac{\partial\theta_2}{\partial r}\bigg|_{r=\alpha(t)}=l_m\gamma_m\dfrac{d\alpha}{dt},\quad t>0,
\end{equation}
\begin{equation}\label{e6}
    \theta_2(\infty,t)=0,\quad t>0,
\end{equation}
\begin{equation}\label{e7}
    \theta_2(r,0)=0,\quad \alpha(0)=0,\quad r>0,
\end{equation}
where $c(\theta_i)$,$\gamma(\theta_i)$ and $\lambda(\theta_i),\;i=1,2$ are the specific heat, the mass density and thermal conductivity which depends on temperature, $\theta_1(r,t)$, $\theta_2(r,t)$ are the temperatures in liquid and solid zones, respectively. The melting temperature is given by $\theta_m$, $l_m>0$ is the  latent heat of the material at melting, $\gamma_m>0$ is the density of the mass at melting and $\alpha(t)$ is the location of the free boundary. 

This paper is organized as follows: section 1 provides a review of the relevant literature on mathematical modeling of heat transfer processes, with a focus on similar studies in one-dimensional cylindrical domains and presents the mathematical formulation of the problem, including a cylindrical heat equation with nonlinear thermal coefficients and the presence of a heat source. Section 2 outlines the solution method based on similarity transformations and its application to reduce the problem to an ordinary differential problem, which turns out to be equivalent to a system of nonlinear integral equations. Section 3 discusses the employment of fixed point theory in a Banach space to establish the existence and uniqueness of the solution.

\section{Equivalent nonlinear system}
A dimensionless transformation
\begin{equation}\label{e8}
    T_i(r,t)=\dfrac{\theta_i(r,t)-\theta_m}{\theta_m},\quad \theta_i(r,t)=T_i(r,t)\theta_m+\theta_m,\quad i=1,2
\end{equation}
helps us to reduce the problem \eqref{e1}-\eqref{e7} in the following form
\begin{equation}\label{e9}
    N(T_1)\dfrac{\partial T_1}{\partial t}=\dfrac{a}{r}\dfrac{\partial}{\partial r}\left[L(T_1)r\dfrac{\partial T_1}{\partial r}\right],\quad r\in D_1,\quad t>0,
\end{equation}
\begin{equation}\label{e10}
     N(T_2)\dfrac{\partial T_2}{\partial t}=\dfrac{a}{r}\dfrac{\partial}{\partial r}\left[L(T_2)r\dfrac{\partial T_2}{\partial r}\right],\quad r\in D_2,\quad t>0,
\end{equation}
\begin{equation}\label{e11}
    \lim_{r\to 0}\left(-2\pi rL(T_1)\dfrac{\partial T_1}{\partial r}\right)=\dfrac{Q_0}{\lambda_0\theta_m},
\end{equation}
\begin{equation}\label{e12}
    T_1(\alpha(t),t)=T_2(\alpha(t),t)=0,\quad t>0,
\end{equation}
\begin{equation}\label{e13}
    -L(T_1(\alpha(t),t))\dfrac{\partial T_1}{\partial r}\bigg|_{r=\alpha(t)}+L(T_2(\alpha(t),t))\dfrac{\partial T_2}{\partial r}\bigg|_{r=\alpha(t)}=\dfrac{l_m\gamma_m}{\lambda_0\theta_m}\dfrac{d\alpha}{dt},\quad t>0,
\end{equation}
\begin{equation}\label{e14}
    T_2(\infty,t)=-1,\quad t>0,
\end{equation}
\begin{equation}\label{e15}
    T_2(r,0)=-1, \quad r>0,
\end{equation}
with 
\begin{equation}\label{def: N y L}
N(T_i)=\dfrac{c(T_i\theta_m+\theta_m)\gamma(T_i\theta_m+\theta_m)}{c_0\gamma_0},\quad L(T_i)=\dfrac{\lambda(T_i\theta_m+\theta_m)}{\lambda_0},\quad i=1,2,
\end{equation}
and $a=\frac{\lambda_0}{c_0\gamma_0}$, where $c_0,\;\gamma_0,\;\lambda_0$ are the reference specific heat, the mass density, the thermal conductivity and the thermal diffusivity of the material, respectively.

To solve the problem \eqref{e9}-\eqref{e15} we propose a similarity-type solution
\begin{equation}\label{e16}
    T_i(r,t)=f_i(\eta),\quad \eta=\dfrac{r^2}{4a^2\alpha_0t}.
\end{equation}
From condition \eqref{e13} the free boundary which describes the movement of the melting interface can be represented
\begin{equation}\label{e17}
    \alpha(t)=2a\alpha_0\sqrt{t},
\end{equation}
where $\alpha_0>0$ is a constant to be determined.

Taking into  account \eqref{e16} and \eqref{e17} the problem \eqref{e9}-\eqref{e14} becomes
\begin{equation}\label{e18}
    \left[L^*(f_1)\eta f_1'\right]'+\alpha_0a\eta N^*(f_1)f_1'=0,\quad 0<\eta<\alpha_0,
\end{equation}
\begin{equation}\label{e19}
    \left[L^*(f_2)\eta f_2'\right]'+\alpha_0a\eta N^*(f_2)f_2'=0,\quad \alpha_0<\eta,
\end{equation}
\begin{equation}\label{e20}
    \lim_{\eta\to 0}\left(-4\pi \eta L^*(f_1)f_1'\right)=\dfrac{Q_0}{\lambda_0\theta_m},
\end{equation}
\begin{equation}\label{e21}
    f_1(\alpha_0)=f_2(\alpha_0)=0,
\end{equation}
\begin{equation}\label{e22}
    -L^*(f_1(\alpha_0))f_1'(\alpha_0)+L^*(f_2(\alpha_0))f_2'(\alpha_0)=  a^2  \alpha_0 \dfrac{l_m\gamma_m}{\lambda_0\theta_m},
\end{equation}
\begin{equation}\label{e23}
    f_2(\infty)=-1,
\end{equation}
where 

\begin{equation}\label{e24}
    N^*(f_i)=\dfrac{c(f_i\theta_m+\theta_m)\gamma(f_i\theta_m+\theta_m)}{c_0\gamma_0},\quad L^*(f_i)=\dfrac{\lambda(f_i\theta_m+\theta_m)}{\lambda_0},\quad i=1,2.
\end{equation}
We can conclude that $(f_1,f_2,\alpha_0)$ is a solution to the problem \eqref{e18}-\eqref{e23} if  the following nonlinear integral equations are satisfied:
\begin{equation}\label{e25}
    f_1(\eta)=D^*\left[F_1(0,\alpha_0,f_1)-F_1(0,\eta,f_1)\right], \quad D^*=\dfrac{Q_0}{4\pi\lambda_0\theta_m},\quad \eta\in [0,\alpha_0],
\end{equation}
\begin{equation}\label{e26}
    f_2(\eta)=-\dfrac{F_2(\alpha_0,\eta,f_2)}{F_2(\alpha_0,\infty, f_2)},\quad \eta\in [\alpha_0, \infty),
\end{equation}
together with the condition
\begin{equation}\label{e31}
   D^*E_1(0,\alpha_0,f_1)- \dfrac{1}{F_2(\alpha_0,\infty,f_2)}=M^*\alpha_0^2,\quad \text{with} \quad M^*=\dfrac{l_m\gamma_m a^2}{\lambda_0\theta_m},   
\end{equation}
where
\begin{equation}\label{e27}
    F_1(0,\eta,f_1)=\int\limits_0^{\eta}\dfrac{E_1(0,s,f_1)}{s L^*(f_1(s))}ds,
\end{equation}
\begin{equation}\label{e28}
    F_2(\alpha_0,\eta,f_2)=\int\limits_{\alpha_0}^{\eta}\dfrac{E_2(\alpha_0,s,f_2)}{ sL^*(f_2(s))}ds,
\end{equation}
\begin{equation}\label{e29}
    E_1(0,\eta,f_1)=\exp\left(-\alpha_0a\int\limits_0^{\eta}\dfrac{N^*(f_1(s))}{L^*(f_1(s))}ds\right),
\end{equation}
\begin{equation}\label{e30}
    E_2(\alpha_0,\eta,f_2)=\exp\left(-\alpha_0a\int\limits_{\alpha_0}^{\eta}\dfrac{N^*(f_2(s))}{L^*(f_2(s))}ds\right).
\end{equation}

In the next section we provide the proofs of existence and uniqueness of the solution of the problem \eqref{e25},\eqref{e26} and \eqref{e31}.

\section{Existence and uniqueness of solution  }

\subsection{Existence and uniqueness of solution to \eqref{e25}-\eqref{e26} for  a fixed $\alpha_0$  }
In this section, we set  $\alpha_0>0$ and prove existence and uniqueness of solution to the system of integral equations \eqref{e25}-\eqref{e26}. For that purpose, we consider the space
\begin{equation*}
    \mathcal{C}=C[0,\alpha_0]\times C_b[\alpha_0,+\infty)
\end{equation*}
where $C[0,\alpha_0]$ is the space of real continuous functions in $[0,\alpha_0]$ and $C_b[\alpha_0,+\infty)$ is the space of bounded and  continuous real-valued  functions.

Notice that $\mathcal{C}$ is a Banach space endowed with the norm
$$||\vec{f}||_{\mathcal{C}}=\max\left\lbrace||f_1||_{C[0,\alpha_0]},||f_2||_{C_b[\alpha_0,+\infty)} \right\rbrace$$
where
$\vec{f}=(f_1,f_2)$ and
\begin{equation}
    ||f_1||_{C[0,\alpha_0]}=\max_{\eta\in[0,\alpha_0]}|f_1(\eta)|,
\end{equation}
\begin{equation}
    ||f_2||_{C_b[\alpha_0,+\infty)}=\sup_{\xi\in[\alpha_0,+\infty)}|f_2(\xi)|.
\end{equation}
In what follows, we will denote with $||\vec{f}||=||\vec{f}||_{\mathcal{C}}$, $||f_1||= ||f_1||_{C[0,\alpha_0]}$ and $||f_2||=||f_2||_{C_b[\alpha_0,+\infty)}$ as long as it does not  give rise to confusion.

Let us define the closed subset of the Banach space $\mathcal{C}$ given by
\begin{equation}
    \mathcal{K}=C[0,\alpha_0]\times \mathcal{M}
\end{equation}
where \begin{equation}
    \mathcal{M}=\{f_2\in C_b[\alpha_0,\infty)/f_2(\alpha_0)=0, f_2(\infty)=-1\},
\end{equation} and the self-mapping operator
\begin{equation}\label{Psi}
\begin{array}{rll}
\Psi^{\alpha_0}:\mathcal{K} &\to &\; \mathcal{K}\\
\vec{f}&\to &\; \Psi(\vec{f})=(U^{\alpha_0}(f_1),W^{\alpha_0}(f_2))
\end{array}
\end{equation}
such that  $U^{\alpha_0}:C[0,\alpha_0]\to C[0,\alpha_0]$ and $W^{\alpha_0}:\mathcal{M}\to\mathcal{M}$ are given by the right hand side of equations \eqref{e25} and \eqref{e26}, respectively, i.e.:
\begin{equation}\label{e34}
    U^{\alpha_0}(f_1)(\eta):=D^*\left[F_1(0,\alpha_0,f_1)-F_1(0,\eta,f_1)\right],\quad 0\leq\eta\leq\alpha_0,
\end{equation}
\begin{equation}\label{e35}
    W^{\alpha_0}(f_2)(\xi):=-\dfrac{F_2(\alpha_0,\xi,f_2)}{F_2(\alpha_0,\infty, f_2)},\quad \xi\geq \alpha_0.
\end{equation}

In order to obtain the existence and uniqueness of solution to the system of integral equations \eqref{e25}-\eqref{e26},  we will apply the fixed point Banach theorem to the operator $\Psi^{\alpha_0}$ given by \eqref{Psi}. For this purpose we will show that this operator is a contraction.

Suppose that $L^*$ and $N^*$ are bounded and satisfy Lipschitz inequalities:
\begin{enumerate}
    \item There exist positive coefficients $\mu$, $\nu$, $\beta$, $\sigma$, $L_{im},\;L_{iM},\;N_{im}$ and $N_{iM}$ ($i=1,2$) with $\mu>\max(1,\nu)$, $\beta>\sigma+2$ such that
    \begin{equation}\label{e36}
        L_{1m}\eta^{-\mu}\leq L^*(f_1)(\eta)\leq L_{1M}\eta^{-\mu},\quad  \forall f_1\in C[0,\alpha_0], \;0<\eta\leq \alpha_0,
    \end{equation}
    \begin{equation}\label{e37}
        N_{1m}\eta^{-\nu}\leq N^*(f_1)(\eta)\leq N_{1M}\eta^{-\nu},\quad  \forall f_1\in C[0,\alpha_0], \;0<\eta\leq \alpha_0,
    \end{equation}
    \begin{equation}\label{e38}
        L_{2,m}\xi^\beta \leq L^*(f_2)(\xi)\leq L_{2,M} \xi^\beta, \quad  \forall f_2\in\mathcal{M},\; \xi\geq \alpha_0
    \end{equation}
     \begin{equation}\label{e39}
        N_{2,m}\xi^\sigma\leq N^*(f_2)(\xi)\leq N_{2,M}\xi^\sigma,\quad \forall f_2\in \mathcal{M}, \; \xi\geq \alpha_0
    \end{equation}
    \item There exist $\Bar{L}_i$ and $\Bar{N}_i,\; i=1,2$ such that
    \begin{equation}\label{e40}
        ||L^*(f_1)-L^*(f_1^*)||\leq \Bar{L}_1||f_1-f_1^*||,\quad \forall f_1,f_1^*\in C[0,\alpha_0],
    \end{equation}
    \begin{equation}\label{e41}
        ||N^*(f_1)-N^*(f_1^*)||\leq \Bar{N}_1||f_1-f_1^*||,\quad \forall f_1,f_1^*\in C[0,\alpha_0],
    \end{equation}
      \begin{equation}\label{e42}
        ||L^*(f_2)-L^*(f_2^*)||\leq \Bar{L}_2||f_2-f_2^*||,\quad \forall f_2,f_2^*\in \mathcal{M},
    \end{equation}
     \begin{equation}\label{e43}
        ||N^*(f_2)-N^*(f_2^*)||\leq \Bar{N}_2||f_2-f_2^*||,\quad \forall f_2,f_2^*\in \mathcal{M}.
    \end{equation}
\end{enumerate}

Below we provide some preliminary results:

\begin{lemma}\label{lem1}
    Assuming  \eqref{e36}-\eqref{e39}, the following inequalities hold:

  \noindent   For $0<\eta\leq\alpha_0$ we have that
    \begin{equation}\label{e44}
        \exp\left(-\tfrac{\alpha_0a N_{1M}}{L_{1m}} \tfrac{\eta^{\mu-\nu+1}}{\mu-\nu+1}\right)\leq E_1(0,\eta,f_1)\leq  \exp\left(-\tfrac{\alpha_0a N_{1m}}{L_{1M}} \tfrac{\eta^{\mu-\nu+1}}{\mu-\nu+1}\right),
    \end{equation}
     \begin{equation}\label{e46}
         \tfrac{\exp\left( - \tfrac{\alpha_0 a N_{1M}}{L_{1m}} \tfrac{\eta^{\mu-\nu+1}}{\mu-\nu+1}\right) \eta^{\mu}}{L_{1M} \mu}\leq F_1(0,\eta,f_1)\leq \tfrac{\eta^\mu}{L_{1m}\mu}.
    \end{equation}
    For $\xi\leq \alpha_0$ we get that
    \begin{equation}\label{e45}
       E_2(\alpha_0,\xi,f_2)\geq   \exp\left(-\tfrac{\alpha_0a}{(\beta-\sigma-1)}\tfrac{N_{2M}}{L_{2m}} \left(\tfrac{1}{\alpha_0^{\beta-\sigma-1}}-\tfrac{1}{\xi^{\beta-\sigma-1}} \right) \right),
    \end{equation}
      \begin{equation}\label{e45-1}
      E_2(\alpha_0,\xi,f_2)\leq   \exp\left(-\tfrac{\alpha_0a}{(\beta-\sigma-1)}\tfrac{N_{2m}}{L_{2M}} \left(\tfrac{1}{\alpha_0^{\beta-\sigma-1}}-\tfrac{1}{\xi^{\beta-\sigma-1}} \right) \right),
    \end{equation}
    \begin{equation}\label{e47}
      \tfrac{\exp\left( -\tfrac{a}{(\beta-\sigma-1)}\tfrac{N_{2M}}{L_{2m}}\tfrac{1}{\alpha_0^{\beta-\sigma-2}} \right)}{L_{2M} \beta}  \left( \tfrac{1}{\alpha_0^\beta}-\tfrac{1}{\xi^\beta}\right)\leq F_2(\alpha_0,\xi,f_2)\leq \tfrac{1}{\beta L_{2m}} \left( \tfrac{1}{\alpha_0^\beta}-\tfrac{1}{\xi^\beta}\right),
    \end{equation}
\end{lemma}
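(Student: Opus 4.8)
The plan is to derive all six estimates by the same two-step mechanism: first bound the integrand pointwise using the power-law hypotheses \eqref{e36}--\eqref{e39}, then integrate and apply the monotone function $\exp$. I would start with $E_1$. Dividing the bounds \eqref{e37} by \eqref{e36} gives, for $0<s\le\alpha_0$,
\[
\frac{N_{1m}}{L_{1M}}\,s^{\mu-\nu}\le \frac{N^*(f_1(s))}{L^*(f_1(s))}\le \frac{N_{1M}}{L_{1m}}\,s^{\mu-\nu}.
\]
Because $\mu>\nu$ the exponent $\mu-\nu+1$ is positive, so $\int_0^\eta s^{\mu-\nu}\,ds=\tfrac{\eta^{\mu-\nu+1}}{\mu-\nu+1}$ converges; multiplying by $-\alpha_0 a$ reverses the inequalities and exponentiating yields \eqref{e44} at once. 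The estimates \eqref{e45}--\eqref{e45-1} for $E_2$ are obtained identically, now dividing \eqref{e39} by \eqref{e38} to bound the ratio by a multiple of $s^{\sigma-\beta}$; here the hypothesis $\beta>\sigma+2$ (in particular $\beta-\sigma-1>0$) guarantees that $\int_{\alpha_0}^\xi s^{\sigma-\beta}\,ds=\tfrac{1}{\beta-\sigma-1}\big(\alpha_0^{-(\beta-\sigma-1)}-\xi^{-(\beta-\sigma-1)}\big)$ is finite, and the sign reversal plus $\exp$ produces the two exponential bounds.

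For $F_1$ and $F_2$ the upper bounds are the easy direction. Since both $E_1$ and $E_2$ are exponentials of nonpositive quantities, $E_1\le 1$ and $E_2\le 1$; combined with the lower bounds $L^*(f_1)\ge L_{1m}\eta^{-\mu}$ and $L^*(f_2)\ge L_{2m}\xi^{\beta}$ this bounds the integrands of \eqref{e27}--\eqref{e28} by $\tfrac{1}{L_{1m}}s^{\mu-1}$ and $\tfrac{1}{L_{2m}}s^{-\beta-1}$ respectively. Integrating (using $\mu>1$ for convergence at $0$ and $\beta>0$ for convergence at $+\infty$) gives the right-hand inequalities of \eqref{e46} and \eqref{e47}.

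The lower bounds for $F_1$ and $F_2$ are the only place that requires care, and I would treat them as the main step. Here one cannot simply bound $E_i$ below by $1$; instead I insert the exponential lower bounds from \eqref{e44} and \eqref{e45} into the integrands and then replace the $s$-dependent exponential by its minimum over the range of integration, so that a constant can be pulled outside the integral. For $F_1$ the factor $\exp\!\big(-\tfrac{\alpha_0aN_{1M}}{L_{1m}}\tfrac{s^{\mu-\nu+1}}{\mu-\nu+1}\big)$ is decreasing in $s$, so its minimum on $[0,\eta]$ is attained at $s=\eta$; pulling out this value and integrating $s^{\mu-1}$ against $1/L_{1M}$ yields the left-hand side of \eqref{e46}. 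For $F_2$ the analogous factor in \eqref{e45} also decreases in $s$ and tends, as $s\to\infty$, to $\exp\!\big(-\tfrac{a}{\beta-\sigma-1}\tfrac{N_{2M}}{L_{2m}}\alpha_0^{-(\beta-\sigma-2)}\big)$, using $\alpha_0\cdot\alpha_0^{-(\beta-\sigma-1)}=\alpha_0^{-(\beta-\sigma-2)}$; this provides a uniform constant lower bound for $E_2$ on $[\alpha_0,\infty)$, which combined with $L^*(f_2)\le L_{2M}\xi^\beta$ and integration of $s^{-\beta-1}$ produces the left-hand side of \eqref{e47}. The only things to verify throughout are the integrability of the power-law integrands at the singular endpoints $0$ and $+\infty$, which is exactly guaranteed by $\mu>\max(1,\nu)$ and $\beta>\sigma+2$, and the correct identification of where each monotone exponential attains its minimum.
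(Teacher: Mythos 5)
Your proposal is correct and follows essentially the same two-step mechanism as the paper: pointwise power-law bounds on the integrands, integration, and monotonicity of $\exp$, with the $F_i$ lower bounds obtained by replacing the decreasing exponential factor by its minimum over the integration range (at $s=\eta$ for $F_1$, at $s=\infty$ for $F_2$). The paper only writes out the $E_1,F_1$ case and declares the second-phase estimates "analogous"; your explicit treatment of $E_2,F_2$, including the identification $\alpha_0\cdot\alpha_0^{-(\beta-\sigma-1)}=\alpha_0^{-(\beta-\sigma-2)}$, fills in exactly what that analogy requires.
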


\begin{proof}
    For $0\leq\eta\leq\alpha_0$,  from  \eqref{e29} we have
    \begin{equation*}
        E_1(0,\eta,f_1)\leq \exp\left(-\tfrac{\alpha_0a N_{1m}}{L_{1M}}\int\limits_0^{\eta} s^{\mu-\nu}ds\right)= \exp\left(-\tfrac{\alpha_0a N_{1m}}{L_{1M}} \tfrac{\eta^{\mu-\nu+1}}{\mu-\nu+1}\right),
    \end{equation*}
and
\begin{equation*}
        E_1(0,\eta,f_1)\geq \exp\left(-\tfrac{\alpha_0a N_{1M}}{L_{1m}}\int\limits_0^{\eta} s^{\mu-\nu}ds\right)= \exp\left(-\tfrac{\alpha_0a N_{1M}}{L_{1m}} \tfrac{\eta^{\mu-\nu+1}}{\mu-\nu+1}\right).
    \end{equation*}
In addition, from \eqref{e27} we obtain 
    \begin{equation*}
        F_1(0,\eta,f_1)\leq\tfrac{1}{L_{1m}}\int\limits_0^{\eta}s^{\mu-1}ds= \tfrac{1}{L_{1m}} \tfrac{\eta^\mu}{\mu},
    \end{equation*}
and
    \begin{equation*}
        F_1(0,\eta,f_1)\geq \int\limits_0^{\eta} \tfrac{\exp\left(-\tfrac{\alpha_0a N_{1M}}{L_{1m}} \tfrac{s^{\mu-\nu+1}}{\mu-\nu+1}\right)}{L_{1M} s^{1-\mu}}ds\geq  \tfrac{\exp\left( -\tfrac{\alpha_0 a  N_{1M}}{L_{1m}} \tfrac{\eta^{\mu-\nu+1}}{\mu-\nu+1}\right) \eta^{\mu}}{L_{1M} \mu}.
    \end{equation*}
The inequalities \eqref{e45}, \eqref{e45-1} and \eqref{e47} are proved analogously  for $\xi\geq \alpha_0$.
\end{proof}

\begin{lemma}\label{lem2}
    Let us assume that \eqref{e36}-\eqref{e43} hold, then  for all $f_1,f_1^*\in C[0,\alpha_0]$ we have:
    \begin{equation}\label{e49}
        |E_1(0,\eta,f_1)-E_1(0,\eta,f_1^*)|\leq\Bar{E}_1(\alpha_0)||f_1-f_1^*||,
    \end{equation}
    \begin{equation}\label{e50}
        |F_1(0,\eta,f_1)-F_1(0,\eta,f_1^*)|\leq \Bar{F}_1(\alpha_0)||f_1-f_1^*||
    \end{equation}
and for all $f_2,f_2^*\in\mathcal{M}$ we have
    \begin{equation}\label{e51}
        |E_2(\alpha_0,\xi,f_2)-E_2(\alpha_0,\xi,f_2^*)|\leq\Bar{E}_2(\alpha_0)||f_2-f_2^*||,
    \end{equation}
    \begin{equation}\label{e52}
        |F_2(\alpha_0,\xi,f_2)-F_2(\alpha_0,\xi,f_2^*)|\leq \Bar{F}_2(\alpha_0)||f_2-f_2^*||,
    \end{equation}
where
    \begin{equation}\label{e53}
        \Bar{E}_1(\alpha_0)=\tfrac{\alpha_0 a}{L_{1m}}\left( \tfrac{\Bar{N}_1 \alpha_0^{\mu+1}}{\mu+1}+\tfrac{\Bar{L}_1N_{1M}}{L_{1m}}\tfrac{\alpha_0^{2\mu-\nu+1}}{(2\mu-\nu+1)}\right),
    \end{equation}
    \begin{equation}\label{e55}
        \Bar{F}_1(\alpha_0)=\tfrac{\alpha_0 a}{L_{1m}^2}\left( \tfrac{\Bar{N}_1 \alpha_0^{2\mu+1}}{(\mu+1)(2\mu+1)}+\tfrac{\Bar{L}_1 N_{1M}}{L_{1m}}\tfrac{\alpha_0^{3\mu-\nu+1}}{(2\mu-\nu+1)(3\mu-\nu+1))}\right)+\tfrac{\Bar{L}_1 }{L_{1m}^2} \tfrac{\alpha_0^{2\mu}}{2\mu},
    \end{equation}
    \begin{equation}\label{e54}
        \Bar{E}_2(\alpha_0)=\tfrac{\alpha_0 a}{L_{2m}} \Bigg[ \tfrac{\Bar{N}_2}{(\beta-1)}  \tfrac{1}{\alpha_0^{\beta-1}} + \tfrac{\Bar{L}_2 N_{2M}}{L_{2m} (2\beta-\sigma-1)}    \tfrac{1}{\alpha_0^{2\beta-\sigma-1}} \Bigg] ,
    \end{equation}
    \begin{equation}\label{e56}
    \begin{array}{ll}
        \Bar{F}_2(\alpha_0) &=\tfrac{\alpha_0 a}{\beta L_{2m}^2} \left[ \tfrac{\Bar{N}_2}{(\beta-1)} \frac{1}{\alpha_0^{\beta-1}}+ \frac{\Bar{L}_2 N_{2M}}{L_{2m} (2\beta-\sigma-1)}   \frac{1}{\alpha_0^{2\beta-\sigma-1}} \right] \frac{1}{\alpha_0^\beta} \\
        &+  \tfrac{\Bar{L}_2}{2\beta L_{2m}^2} \frac{1}{\alpha_0^{2\beta}}.
        \end{array}
    \end{equation}
\end{lemma}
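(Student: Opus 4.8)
The plan is to exploit the fact that every exponent appearing in $E_1$ and $E_2$ is nonpositive (since $\alpha_0,a,N^*,L^*>0$), so that the elementary bound $|e^{-x}-e^{-y}|\le |x-y|$ for $x,y\ge 0$ linearizes the problem: the difference of two such exponentials is controlled by the difference of their exponents with no surviving exponential factor. I would open with this observation and thereby reduce \eqref{e49} to estimating $\alpha_0 a\int_0^{\eta}\left|\tfrac{N^*(f_1)}{L^*(f_1)}-\tfrac{N^*(f_1^*)}{L^*(f_1^*)}\right|ds$.

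The core algebraic step is to linearize the difference of the ratios $N^*/L^*$. Adding and subtracting $N^*(f_1^*)/L^*(f_1)$ gives
\[
\frac{N^*(f_1)}{L^*(f_1)}-\frac{N^*(f_1^*)}{L^*(f_1^*)}
=\frac{N^*(f_1)-N^*(f_1^*)}{L^*(f_1)}
+N^*(f_1^*)\,\frac{L^*(f_1^*)-L^*(f_1)}{L^*(f_1)\,L^*(f_1^*)}.
\]
I would then insert the lower bound on $L^*$ from \eqref{e36}, the upper bound on $N^*$ from \eqref{e37}, and the Lipschitz constants $\bar N_1,\bar L_1$ from \eqref{e40}--\eqref{e41}, producing a pointwise estimate whose integrand is a sum of the monomials $s^{\mu}$ and $s^{2\mu-\nu}$. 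Integrating over $[0,\eta]$ and bounding $\eta\le\alpha_0$ reproduces exactly the constant $\bar E_1(\alpha_0)$ of \eqref{e53}; the hypothesis $\mu>\max(1,\nu)$ is precisely what keeps these powers integrable at $s=0$.

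For \eqref{e50} I would split the integrand difference of $F_1$ as
\[
\frac{E_1(f_1)-E_1(f_1^*)}{s\,L^*(f_1)}
+\frac{E_1(f_1^*)}{s}\!\left(\frac{1}{L^*(f_1)}-\frac{1}{L^*(f_1^*)}\right),
\]
bound the first summand with the \emph{pointwise} version of the $E_1$ estimate just derived (i.e.\ before replacing $\eta$ by $\alpha_0$, so that the monomials $s^{\mu+1}$ and $s^{2\mu-\nu+1}$ survive) together with $1/L^*(f_1)\le s^{\mu}/L_{1m}$, and bound the second using $E_1\le 1$ and the Lipschitz inequality for $L^*$. Integration then yields the three terms of $\bar F_1(\alpha_0)$ in \eqref{e55}. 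The estimates \eqref{e51}--\eqref{e52} follow by the identical decomposition carried out on $[\alpha_0,\infty)$, now invoking the growth bounds \eqref{e38}--\eqref{e39} and the Lipschitz inequalities \eqref{e42}--\eqref{e43}.

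The main obstacle is twofold. First, the exponent bookkeeping: each decomposition spawns monomials whose powers must land on exactly the combinations $\mu+1,\ 2\mu-\nu+1,\ 2\mu,\ 2\mu+1,\ 3\mu-\nu+1$ (and their $\beta,\sigma$ counterparts) appearing in \eqref{e53}--\eqref{e56}, so the integrations must be tracked with care. Second, and genuinely delicate for $f_2$, the integrals defining $E_2$ and $F_2$ extend to $+\infty$, so I must check that each of the monomials $s^{-\beta},\ s^{\sigma-2\beta},\ s^{-\beta-1},\ s^{-2\beta-1}$ is integrable at infinity: the assumption $\beta>\sigma+2$ forces $2\beta-\sigma-1>0$ and $\beta>1$, which is exactly what makes all four improper integrals converge and bounds them by the stated inverse powers of $\alpha_0$.
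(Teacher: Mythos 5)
Your proposal is correct and follows essentially the same route as the paper: the bound $|e^{-x}-e^{-y}|\le|x-y|$, the add-and-subtract decomposition of $N^*/L^*$ (the paper writes it as a single fraction with numerator $N^*(f_1)L^*(f_1^*)-N^*(f_1^*)L^*(f_1)$ and then splits it into the same two terms you obtain), the splitting of the $F_i$ differences into the two summands you describe, and the same monomial bookkeeping leading to \eqref{e53}--\eqref{e56}. No substantive differences to report.
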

\begin{proof}
    Using inequality $|\exp(-x)-\exp(-y)|\leq|x-y|$ and taking into account \eqref{e29}, for all $f_1,f_1^*\in C^0[0,\alpha_0]$ we can provide the next result
    \begin{align*}
        &|E_1(0,\eta,f_1)-E_1(0,\eta, f_1^*)|\leq\alpha_0a\int\limits_0^{\eta}\left|\frac{N^*(f_1(s))}{L^*(f_1(s))}-\frac{N^*(f_1^*(s))}{L^*(f_1^*(s))}\right|ds\\
        &\leq\alpha_0 a \int\limits_0^{\eta}\dfrac{|N^*(f_1(s))L^*(f_1^*(s))-N^*(f_1^*)L^*(f_1(s))|}{|L^*(f_1(s))|\cdot|L^*(f_1^*(s))|}ds\\
        &\leq\alpha_0 a \Big[\int\limits_0^{\eta}\dfrac{|N^*(f_1(s))-N^*(f_1^*(s))|}{|L^*(f_1(s))| |L^*(f_1^*(s))|}  |L^*(f_1^*(s))| ds\\
        &+\int\limits_0^{\eta}\dfrac{|L^*(f_1^*(s))-L^*(f_1)|}{|L^*(f_1(s))||L^*(f_1^*(s))|}|N^*(f_1^*(s))|ds \Big]\\
        &\leq\dfrac{\alpha_0 a}{L_{1m}}\left(\Bar{N}_1 \int\limits_0^{\eta} s^{\mu} ds+\frac{\Bar{L}_1 N_{1M}}{L_{1m}} \int\limits_0^{\eta} s^{2\mu-\nu} ds\right)||f_1-f_1^*||\\
        &\leq\dfrac{\alpha_0 a}{L_{1m}}\left(\frac{\Bar{N}_1  \eta^{\mu+1}}{\mu+1}+\dfrac{\Bar{L}_1N_{1M}}{L_{1m}} \frac{\eta^{2\mu-\nu+1}}{(2\mu-\nu+1)}\right)||f_1-f_1^*||\\[0.25cm]
        &\leq \Bar{E}_1(\alpha_0)||f_1-f_1^*||.\\ 
    \end{align*}

Let us  prove \eqref{e50}. Taking into account  definition \eqref{e27} and inequalities \eqref{e36},  \eqref{e40} and \eqref{e49} we obtain 
\begin{align*}
   & |F_1(0,\eta,f_1)-F_1(0,\eta,f_1^*)|\leq \int\limits_0^{\eta}\Bigg|\dfrac{E_1(0,s,f_1)}{L^*(f_1(s))}-\dfrac{E_1(0,s,f_1^*)}{L^*(f_1^*(s))}\Bigg|\dfrac{ds}{s}=I_1+I_2,
\end{align*}
where
\begin{align*}
    I_1 &\equiv\int\limits_0^{\eta}\frac{|E_1(0,s,f_1)-E_1(0,s,f_1^*)|}{|L^*(f_1(s))|}\frac{ds}{s}\\
    &\leq\frac{\alpha_0 a}{L_{1m}^2} \int\limits_0^{\eta}  \left(\frac{\Bar{N}_1  s^{\mu+1}}{\mu+1}+\frac{\Bar{L}_1N_{1M}}{L_{1m}} \frac{s^{2\mu-\nu+1}}{(2\mu-\nu+1)} \right)\frac{ds}{s^{1-\mu}}\; ||f_1-f_1^*|| \\
    &\leq\frac{\alpha_0 a}{L_{1m}^2} \left(\int\limits_0^{\eta}   \frac{\Bar{N}_1 s^{2\mu}}{\mu+1}+\frac{\Bar{L}_1N_{1M}}{L_{1m}} \frac{s^{3\mu-\nu}}{(2\mu-\nu+1)} ds\right)  ||f_1-f_1^*||\\
    & \leq\frac{\alpha_0 a}{L_{1m}^2} \left( \frac{ \Bar{N}_1  s^{2\mu+1}}{(\mu+1)(2\mu+1)}+ \frac{\Bar{L}_1N_{1M}}{L_{1m}} \frac{\eta^{3\mu-\nu+1}}{(2\mu-\nu+1)(3\mu-\nu+1)}\right)\; ||f_1-f_1^*||
\end{align*}
and

\begin{align*}
    I_2&\equiv\int\limits_0^{\eta}\dfrac{|L^*(f_1^*(s))-L^*(f_1(s))|}{|L^*(f_1^*(s))||L^*(f_1(s))|}|E_1(0,s,f_1^*)|\dfrac{ds}{s}\leq\dfrac{\Bar{L}_1}{L_{1m}^2}\left(\int\limits_0^{\eta}s^{2\mu-1}ds \right) ||f_1-f_1^*||\\&\leq \dfrac{\Bar{L}_1}{ L_{1m}^2}\frac{\eta^{2\mu}}{2\mu}||f_1-f_1^*||.
\end{align*}
Therefore we get that 
$$|F_1(0,\eta,f_1)-F_1(0,\eta,f_1^*)|\leq \Bar{F}_1(\alpha_0) ||f_1-f_1^*||,$$
with $\Bar{F}_1$ given by \eqref{e55}.

In a similar manner, from definition \eqref{e30} and assumptions \eqref{e38}, \eqref{e39}, \eqref{e42}, \eqref{e43}, for all $f_2,f_2^*\in \mathcal{M}$ it follows that:
  \begin{align*}
        &|E_2(\alpha_0,\xi,f_2)-E_2(\alpha_0,\xi, f_2^*)|\leq\alpha_0a\int\limits_0^{\xi}\left|\frac{N^*(f_2(s))}{L^*(f_2(s))}-\frac{N^*(f_2^*(s))}{L^*(f_2^(s)*)}\right|ds\\
        &\leq\alpha_0 a \int\limits_{\alpha_0}^{\xi}\dfrac{|N^*(f_2(s))L^*(f_2^*(s))-N^*(f_2^*(s))L^*(f_2(s))|}{|L^*(f_2(s))|\cdot|L^*(f_2^*(s))|}ds\\
        &\leq\alpha_0 a \Bigg(\int\limits_{\alpha_0}^{\xi}\dfrac{|N^*(f_2(s))-N^*(f_2^*(s))|}{|L^*(f_2(s))| |L^*(f_2^*(s))|}  |L^*(f_2^*(s))| ds+\\
        &+\int\limits_{\alpha_0}^{\xi}\dfrac{|L^*(f_2^*(s))-L^*(f_2(s))|}{|L^*(f_2(s))||L^*(f_2^*(s))|}|N^*(f_2^*(s))|ds \Bigg)\\
        &\leq\alpha_0 a \left( \dfrac{\Bar{N}_2}{L_{2m}} \int\limits_{\alpha_0}^{\xi} 
 \frac{1}{s^\beta} ds+ \frac{\Bar{L}_2 N_{2M}}{L_{2m}^2}\int\limits_{\alpha_0}^{\xi} \dfrac{s^\sigma}{s^{2\beta}}ds \right) ||f_2-f_2^*||\\
        &\leq\dfrac{\alpha_0 a}{L_{2m}} \Bigg[ \frac{\Bar{N}_2}{(\beta-1)} \left( \frac{1}{\alpha_0^{\beta-1}}-\frac{1}{\xi^{\beta-1}}\right) \\
        &+ \frac{\Bar{L}_2 N_{2M}}{L_{2m} (2\beta-\sigma-1)}   \left( \frac{1}{\alpha_0^{2\beta-\sigma-1}}-\frac{1}{\xi^{2\beta-\sigma-1}}\right) \Bigg] ||f_2-f_2^*||\\[0.25cm]
        &\leq \Bar{E}_2(\alpha_0)||f_2-f_2^*||.
    \end{align*}

\medskip

Finally, from \eqref{e28}, we obtain that
\begin{align*}
   & |F_2(\alpha_0,\xi,f_2)-F_2(\alpha_0,\xi,f_2^*)|\leq \int\limits_{\alpha_0}^{\xi}\Bigg|\dfrac{E_2(\alpha_0,s,f_2)}{L^*(f_2(s))}-\dfrac{E_2(\alpha_0,s,f_2^*)}{L^*(f_2^*(s))}\Bigg|\dfrac{ds}{s}=J_1+J_2,
\end{align*}
where
\begin{align*}
    J_1 &\equiv\int\limits_{\alpha_0}^{\xi}\frac{|E_2(\alpha_0,s,f_2)-E_2(\alpha_0,s,f_2^*)|}{|L^*(f_2(s))|}\frac{ds}{s}\leq \int\limits_{\alpha_0}^{\xi} \frac{\Bar{E}_2(s)}{L_{2m}s^{\beta+1}}ds \: ||f_2-f_2^*||\\
    &\leq \dfrac{\alpha_0 a}{L_{2m}^2} \Bigg[ \frac{\Bar{N}_2}{(\beta-1)} \frac{1}{\alpha_0^{\beta-1}}+ \frac{\Bar{L}_2 N_{2M}}{L_{2m} (2\beta-\sigma-1)}   \frac{1}{\alpha_0^{2\beta-\sigma-1}} \Bigg] \left(\int\limits_{\alpha_0}^\xi \frac{1}{s^{\beta+1}} ds \right) ||f_2-f_2^*||\\
    &\leq \dfrac{\alpha_0 a}{L_{2m}^2 \beta } \Bigg[ \frac{\Bar{N}_2}{(\beta-1)} \frac{1}{\alpha_0^{\beta-1}}+ \frac{\Bar{L}_2 N_{2M}}{L_{2m} (2\beta-\sigma-1)}   \frac{1}{\alpha_0^{2\beta-\sigma-1}} \Bigg] \left(\frac{1}{\alpha_0^\beta}-\frac{1}{\xi^\beta} \right) ||f_2-f_2^*||\\
\end{align*}
and
\begin{align*}
    J_2&\equiv\int\limits_{\alpha_0}^{\xi}\dfrac{|L^*(f_2^*)-L^*(f_2(s))|}{|L^*(f_2^*(s))||L^*(f_2(s))|}|E_2(\alpha_0,s,f_2^*)|\dfrac{ds}{s}\leq \dfrac{\Bar{L}_2}{L_{2m}^2} \left(\int\limits_{\alpha_0}^\xi \frac{1}{s^{2\beta+1}}ds\right)  ||f_2-f_2^*|| \\ 
    & =\dfrac{\Bar{L}_2}{2\beta L_{2m}^2} \left(\frac{1}{\alpha_0^{2\beta}}-\frac{1}{\xi^{2\beta}}\right)  ||f_2-f_2^*||.
\end{align*}

As a consequence we obtain inequality \eqref{e52} with $\Bar{F}_2$ given by \eqref{e56}.

\end{proof}

\begin{lemma}\label{lem3}
    Assuming that \eqref{e36},\eqref{e37}, \eqref{e40} and \eqref{e41} hold we have that
    \begin{equation}
         ||U^{\alpha_0}(f_1)-U^{\alpha_0}(f_1^*)||\leq \varphi(\alpha_0)||f_1-f_1^*||
    \end{equation}
    %There exists $\Bar{\alpha}_{1,0}>0$ and defined as unique solution to $\varphi(\Bar{\alpha}_{1,0})=1$ for each $0<\alpha_0<\Bar{\alpha}_{1,0}$, then self-mapping operator $U$ defined by \eqref{e34} is a contraction operator such that
    with
    \begin{equation}\label{e56}
        \varphi(\alpha_0):=2D^*\Bar{F}_1(\alpha_0)
    \end{equation}
    where $D^*$ and $\Bar{F}_1(z)$ are defined from \eqref{e25} and \eqref{e55}, respectively.
\end{lemma}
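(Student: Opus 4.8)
The plan is to exploit the fact that $U^{\alpha_0}$ is assembled entirely from the functional $F_1(0,\cdot,\cdot)$, for which Lemma~\ref{lem2} already furnishes a Lipschitz estimate. First I would fix two functions $f_1,f_1^*\in C[0,\alpha_0]$ and write out the difference of their images directly from definition \eqref{e34}:
$$U^{\alpha_0}(f_1)(\eta)-U^{\alpha_0}(f_1^*)(\eta)=D^*\big[F_1(0,\alpha_0,f_1)-F_1(0,\alpha_0,f_1^*)\big]-D^*\big[F_1(0,\eta,f_1)-F_1(0,\eta,f_1^*)\big].$$
Applying the triangle inequality then splits the right-hand side into two terms, each of the form $D^*|F_1(0,z,f_1)-F_1(0,z,f_1^*)|$, with $z=\alpha_0$ in the first and $z=\eta$ in the second.

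The central step is to invoke inequality \eqref{e50} from Lemma~\ref{lem2}, which states that $|F_1(0,\eta,f_1)-F_1(0,\eta,f_1^*)|\leq \Bar{F}_1(\alpha_0)\,||f_1-f_1^*||$. The crucial observation is that this bound is \emph{uniform} in $\eta$ over the whole interval $[0,\alpha_0]$, and in particular holds at the endpoint $\eta=\alpha_0$; consequently both terms produced by the triangle inequality are controlled by the same quantity $\Bar{F}_1(\alpha_0)\,||f_1-f_1^*||$. Adding them yields the pointwise estimate
$$|U^{\alpha_0}(f_1)(\eta)-U^{\alpha_0}(f_1^*)(\eta)|\leq 2D^*\Bar{F}_1(\alpha_0)\,||f_1-f_1^*||,\qquad 0\leq\eta\leq\alpha_0.$$

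Finally, since the right-hand side is independent of $\eta$, I would take the maximum over $\eta\in[0,\alpha_0]$ on the left, which by definition equals the norm $||U^{\alpha_0}(f_1)-U^{\alpha_0}(f_1^*)||$, and then identify the constant $2D^*\Bar{F}_1(\alpha_0)$ with $\varphi(\alpha_0)$ from \eqref{e56}. This gives the asserted bound. I do not expect a genuine obstacle here: all of the analytic difficulty has already been absorbed into Lemma~\ref{lem2}, so what remains is an algebraic manipulation together with the elementary remark that the uniform-in-$\eta$ character of \eqref{e50} permits bounding both the boundary contribution at $\alpha_0$ and the running contribution at $\eta$ by the same expression. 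The only point worth stating explicitly is that the factor $2$ appearing in $\varphi(\alpha_0)$ originates precisely from these two contributions.
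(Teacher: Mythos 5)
Your argument is correct and coincides with the paper's own proof: both expand the difference from definition \eqref{e34}, split it by the triangle inequality into the contributions at $\eta$ and at $\alpha_0$, bound each by \eqref{e50} from Lemma~\ref{lem2}, and take the supremum over $\eta\in[0,\alpha_0]$ to obtain the factor $2D^*\Bar{F}_1(\alpha_0)=\varphi(\alpha_0)$. No gap to report.
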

\begin{proof}
    Suppose $f_1,\;f_1^*\in C[0,\alpha_0]$. For each $\eta\in[0,\alpha_0]$, we have
    \begin{align*}
       & |U^{\alpha_0}(f_1)(\eta)-U^{\alpha_0}(f_1^*)(\eta)| \\
        &\leq D^*|F_1(0,\alpha_0,f_1)-F_1(0,\eta,f_1)-F_1(0,\alpha_0,f_1^*)+F_1(0,\eta,f_1^*)|\\
        &\leq D^*\Big(|F_1(0,\eta,f_1^*)-F_1(0,\eta,f_1)|+|F_1(0,\alpha_0,f_1)-F_1(0,\alpha_0,f_1^*)|\Big)\\
        &\leq 2D^*\Bar{F}_1(\alpha_0)||f_1-f_1^*||= \varphi(\alpha_0)||f_1-f_1^*||.
    \end{align*}
 %   It can be easily checked that function $\varphi(z)$ is increasing function as $z$ goes from $0$ to $+\infty$. It implies that there exists $\Bar{\alpha}_{1,0}>0$ which is $\varphi(\Bar{\alpha}_{1,0})=1$ such that
  %  $$\varphi(z)<1\quad \forall 0\leq z<\Bar{\alpha}_{1,0}\quad \text{and}\quad \varphi(z)>1\quad \forall z>\Bar{\alpha}_{1,0}.$$
   % Then we can conclude that if we take $\alpha_0$ from $0<\alpha_0<\Bar{\alpha}_{1,0}$, it gives us that $\varphi(\alpha_0)<1$ and operator $U$ defined by \eqref{e34} is a contraction operator.
\end{proof}

\begin{lemma}\label{lemW}
    Assuming that \eqref{e38},\eqref{e39}, \eqref{e42} and \eqref{e43} hold we have that
    \begin{equation}
         ||W^{\alpha_0}(f_2)-W^{\alpha_0}(f_2^*)||\leq \widehat{\varphi}(\alpha_0)||f_2-f_2^*||
    \end{equation}  
with
    \begin{equation}\label{e57}
    \widehat{\varphi}(\alpha_0):=2L_{2M}\alpha_0^\beta \beta  \Bar{F}_2(\alpha_0) \exp\left(  \tfrac{a N_{2M}}{L_{2m}(\beta-(\sigma+1))} \tfrac{1}{\alpha_0^{\beta-(\sigma+2)}}\right).
    \end{equation}
\end{lemma}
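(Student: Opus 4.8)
The plan is to view $W^{\alpha_0}$ as a quotient and to bound a difference of quotients. Setting $p=F_2(\alpha_0,\xi,f_2)$, $q=F_2(\alpha_0,\infty,f_2)$, $p^*=F_2(\alpha_0,\xi,f_2^*)$ and $q^*=F_2(\alpha_0,\infty,f_2^*)$, and recalling \eqref{e35}, I would start from the algebraic identity
\[
\frac{p^*}{q^*}-\frac{p}{q}=\frac{p^*-p}{q^*}+\frac{p}{q}\cdot\frac{q-q^*}{q^*},
\]
so that for every $\xi\geq\alpha_0$
\[
|W^{\alpha_0}(f_2)(\xi)-W^{\alpha_0}(f_2^*)(\xi)|\leq\frac{|p^*-p|}{|q^*|}+\frac{|p|}{|q|}\cdot\frac{|q-q^*|}{|q^*|}.
\]
The estimate then reduces to controlling the numerator increments $|p^*-p|$ and $|q-q^*|$, the reciprocal $1/|q^*|$, and the ratio $|p|/|q|$.

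For the numerator increments, both $|p^*-p|$ and $|q-q^*|$ are differences of $F_2$ at a common upper limit for the two arguments $f_2,f_2^*$ (the second obtained by letting $\xi\to\infty$), so the Lipschitz estimate \eqref{e52} of Lemma \ref{lem2} bounds each by $\Bar{F}_2(\alpha_0)\|f_2-f_2^*\|$. For the reciprocal, I would pass to the limit $\xi\to\infty$ in the lower bound \eqref{e47} of Lemma \ref{lem1}, which gives
\[
q^*=F_2(\alpha_0,\infty,f_2^*)\geq\frac{1}{L_{2M}\beta\,\alpha_0^\beta}\exp\left(-\tfrac{a}{\beta-\sigma-1}\tfrac{N_{2M}}{L_{2m}}\tfrac{1}{\alpha_0^{\beta-\sigma-2}}\right)>0,
\]
whence $1/|q^*|\leq L_{2M}\beta\,\alpha_0^\beta\exp\left(\tfrac{aN_{2M}}{L_{2m}(\beta-\sigma-1)}\tfrac{1}{\alpha_0^{\beta-\sigma-2}}\right)$, which is precisely the prefactor of $2\Bar{F}_2(\alpha_0)$ in the claimed constant $\widehat\varphi(\alpha_0)$ of \eqref{e57}. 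The remaining ingredient is the observation that $|p|/|q|\leq1$: since $E_2>0$ and $L^*>0$, the integrand in \eqref{e28} is positive, so $F_2(\alpha_0,\cdot,f_2)$ is nondecreasing in its upper limit and $0\leq p=F_2(\alpha_0,\xi,f_2)\leq F_2(\alpha_0,\infty,f_2)=q$.

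Combining these facts, each of the two terms on the right-hand side is bounded by $L_{2M}\beta\,\alpha_0^\beta\,\Bar{F}_2(\alpha_0)\exp\left(\tfrac{aN_{2M}}{L_{2m}(\beta-\sigma-1)}\tfrac{1}{\alpha_0^{\beta-\sigma-2}}\right)\|f_2-f_2^*\|$, their sum supplies the factor $2$, and taking the supremum over $\xi\geq\alpha_0$ yields the assertion with $\widehat\varphi(\alpha_0)$ as in \eqref{e57}. I expect the one delicate point to be the second term $\frac{|p|}{|q|}\frac{|q-q^*|}{|q^*|}$: a naive bound would leave an extra reciprocal $1/|q|$ that could inflate the constant, and the argument hinges on absorbing it through the uniform monotonicity bound $|p|/|q|\leq1$. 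Verifying that $q$ and $q^*$ are bounded away from zero uniformly in $f_2,f_2^*\in\mathcal{M}$ (guaranteed by \eqref{e47}) and that the monotonicity holds pointwise are the two facts requiring care; everything else is a routine assembly of Lemmas \ref{lem1} and \ref{lem2}.
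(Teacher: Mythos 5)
Your proposal is correct and follows essentially the same route as the paper's proof: the same difference-of-quotients decomposition, the Lipschitz bound \eqref{e52} for the numerators, the $\xi\to\infty$ limit of the lower bound in \eqref{e47} for the reciprocal of $F_2(\alpha_0,\infty,\cdot)$, and the monotonicity bound $|F_2(\alpha_0,\xi,f_2)|\leq F_2(\alpha_0,\infty,f_2)$ (which the paper uses implicitly and you make explicit). The constant $\widehat{\varphi}(\alpha_0)$ comes out exactly as in \eqref{e57}.
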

\begin{proof}
    Suppose $f_2,f_2^*\in\mathcal{M}$, then we have
    \begin{align*}
        &|W^{\alpha_0}(f_2)(\xi)-W^{\alpha_0}(f_2^*)(\eta)|\leq\Bigg|\dfrac{F_2(\alpha_0,\xi,f_2^*)}{F_2(\alpha_0,\infty,f_2^*)}-\dfrac{F_2(\alpha_0,\xi,f_2)}{F_2(\alpha_0,\infty,f_2)}\Bigg|\\
        &\leq\dfrac{|F_2(\alpha_0,\xi,f_2^*)F_2(\alpha_0,\infty,f_2)-F_2(\alpha_0,\xi,f_2)F_2(\alpha_0,\infty,f_2^*)|}{|F_2(\alpha_0,\infty,f_2^*)||F_2(\alpha_0,\infty,f_2)|}\\
        &\leq \dfrac{|F_2(\alpha_0,\xi,f_2^*)-F_2(\alpha_0,\xi,f_2)|}{|F_2(\alpha_0,\infty,f_2^*)|}\\
        &+\dfrac{|F_2(\alpha_0,\infty,f_2)-F_2(\alpha_0,\infty,f_2^*)|}{|F_2(\alpha_0,\infty,f_2^*)||F_2(\alpha_0,\infty,f_2)|}|F_2(\alpha_0,\eta,f_2)|\\
        &\leq 2L_{2M}\alpha_0^\beta \beta  \Bar{F}_2(\alpha_0) \exp\left( \tfrac{a N_{2M}}{L_{2m}(\beta-(\sigma+1))} \tfrac{1}{\alpha_0^{\beta-(\sigma+2)}}\right)||f_2-f_2^*||\\
        &=  \widehat{\varphi}(\alpha_0)||f_2-f_2^*||.
    \end{align*}
\end{proof}

\begin{theorem} \label{teo:existfixedpoint}
    Suppose that \eqref{e36}-\eqref{e43} hold. Let us define $\alpha_{01}=\hat{\varphi}^{-1}(1)$ and  $\alpha_{02}=\varphi^{-1}(1)$, where $\varphi$ and $\widehat{\varphi}$ are given by \eqref{e56} and \eqref{e57}, respectively.
    If
    \begin{equation}\label{hipQ0}
        Q_0<\dfrac{2\pi\lambda_0 \theta_m}{\Bar{F}_1(\alpha_{01})}
    \end{equation}
    and  $\alpha_0\in (\alpha_{01},\alpha_{02})$  then there exists a unique fixed point $\vec{f}\in\mathcal{K}$ of the operator $\Psi$ given by \eqref{Psi}.
\end{theorem}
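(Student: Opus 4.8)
The plan is to apply the Banach fixed point theorem to the self-map $\Psi^{\alpha_0}$ on $\mathcal{K}$. First I would record that $\mathcal{K}=C[0,\alpha_0]\times\mathcal{M}$ is a closed subset of the Banach space $\mathcal{C}$ and hence is itself a complete metric space; that $\Psi^{\alpha_0}$ maps $\mathcal{K}$ into $\mathcal{K}$ follows from its construction, since $W^{\alpha_0}(f_2)(\alpha_0)=0$ and $W^{\alpha_0}(f_2)(\infty)=-1$ are immediate from \eqref{e35} (the integral $F_2$ vanishes at $\xi=\alpha_0$ and tends to $F_2(\alpha_0,\infty,f_2)$ as $\xi\to\infty$). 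It then only remains to exhibit a contraction constant strictly below $1$.

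For the contraction estimate I would exploit the max-norm on $\mathcal{C}$. For any $\vec{f}=(f_1,f_2)$ and $\vec{f}^*=(f_1^*,f_2^*)$ in $\mathcal{K}$,
\begin{align*}
\|\Psi^{\alpha_0}(\vec{f})-\Psi^{\alpha_0}(\vec{f}^*)\|
&=\max\{\|U^{\alpha_0}(f_1)-U^{\alpha_0}(f_1^*)\|,\;\|W^{\alpha_0}(f_2)-W^{\alpha_0}(f_2^*)\|\}\\
&\leq\max\{\varphi(\alpha_0)\|f_1-f_1^*\|,\;\widehat{\varphi}(\alpha_0)\|f_2-f_2^*\|\}\\
&\leq\max\{\varphi(\alpha_0),\widehat{\varphi}(\alpha_0)\}\,\|\vec{f}-\vec{f}^*\|,
\end{align*}
where the middle inequality uses Lemma~\ref{lem3} and Lemma~\ref{lemW}, and the last uses $\|f_i-f_i^*\|\leq\|\vec{f}-\vec{f}^*\|$. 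Thus it suffices to find a range of $\alpha_0$ for which both $\varphi(\alpha_0)<1$ and $\widehat{\varphi}(\alpha_0)<1$.

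Next I would analyze the two scalar functions. By \eqref{e56}, $\varphi(\alpha_0)=2D^*\bar{F}_1(\alpha_0)$ is, up to the positive constant $2D^*$, a finite sum of strictly positive powers of $\alpha_0$ (the hypothesis $\mu>\max(1,\nu)$ guarantees that every exponent appearing in \eqref{e55} is positive); hence $\varphi$ is continuous, strictly increasing, with $\varphi(0^+)=0$ and $\varphi(+\infty)=+\infty$, so it is a bijection of $(0,+\infty)$ onto itself and $\varphi(\alpha_0)<1\iff\alpha_0<\alpha_{02}:=\varphi^{-1}(1)$. The function $\widehat{\varphi}$ is the delicate one. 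Using $\beta>\sigma+2$ I would rewrite the factor $\alpha_0^{\beta}\beta\,\bar{F}_2(\alpha_0)$, with $\bar{F}_2$ as in Lemma~\ref{lem2}, as a sum of terms of the form $c\,\alpha_0^{-p}$ with $p>0$, so that this factor is strictly decreasing; since the exponential factor in \eqref{e57} has exponent proportional to $\alpha_0^{-(\beta-(\sigma+2))}$ with $\beta-(\sigma+2)>0$, it too is positive and strictly decreasing. Being a product of positive strictly decreasing factors, $\widehat{\varphi}$ is continuous and strictly decreasing, with $\widehat{\varphi}(0^+)=+\infty$ and $\widehat{\varphi}(+\infty)=0$, whence $\widehat{\varphi}(\alpha_0)<1\iff\alpha_0>\alpha_{01}:=\widehat{\varphi}^{-1}(1)$.

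Finally I would show that hypothesis \eqref{hipQ0} is exactly what makes the interval $(\alpha_{01},\alpha_{02})$ nonempty. Since $\varphi$ is increasing, $\alpha_{01}<\alpha_{02}$ is equivalent to $\varphi(\alpha_{01})<\varphi(\alpha_{02})=1$, and because $2D^*=\tfrac{Q_0}{2\pi\lambda_0\theta_m}$ one has $\varphi(\alpha_{01})=\tfrac{Q_0}{2\pi\lambda_0\theta_m}\bar{F}_1(\alpha_{01})$, so $\varphi(\alpha_{01})<1$ is precisely the condition \eqref{hipQ0}. Consequently, for every $\alpha_0\in(\alpha_{01},\alpha_{02})$ the constant $k:=\max\{\varphi(\alpha_0),\widehat{\varphi}(\alpha_0)\}<1$, $\Psi^{\alpha_0}$ is a contraction on the complete space $\mathcal{K}$, and the Banach fixed point theorem yields the unique fixed point $\vec{f}\in\mathcal{K}$. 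I expect the main obstacle to be establishing the strict monotonicity and limiting behaviour of $\widehat{\varphi}$: this hinges on the algebraic simplification of $\alpha_0^{\beta}\beta\,\bar{F}_2(\alpha_0)$ into negative powers of $\alpha_0$ and on the careful use of $\beta>\sigma+2$, without which neither the polynomial factor nor the exponential factor would be guaranteed to decay, and the characterization $\widehat{\varphi}(\alpha_0)<1\iff\alpha_0>\alpha_{01}$ could fail.
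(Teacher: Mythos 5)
Your proposal is correct and follows essentially the same route as the paper: the same max-norm contraction estimate via Lemmas \ref{lem3} and \ref{lemW}, the same monotonicity analysis of $\varphi$ and $\widehat{\varphi}$ to locate $(\alpha_{01},\alpha_{02})$, and the same identification of \eqref{hipQ0} with $\varphi(\alpha_{01})<1$. You in fact supply details the paper only asserts (completeness of $\mathcal{K}$, the self-map property of $W^{\alpha_0}$, and the explicit exponent bookkeeping behind the monotonicity of $\widehat{\varphi}$), all of which check out.
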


\begin{proof}
Let us consider $\vec{f}=(f_1,f_2)\in \mathcal{K}$ and $\vec{f^*}=(f_1^*,f_2^*)\in\mathcal{K}$. Taking into account Lemmas \ref{lem3} and \ref{lemW}, it follows that
   $$\begin{array}{ll}   
   ||\Psi^{\alpha_0}(\vec{f})-\Psi^{\alpha_0}(\vec{f^*})||=\max\Big\lbrace  ||U^{\alpha_0}(f_1)-U^{\alpha_0}(f_1^*)||, ||W^{\alpha_0}(f_2)-W^{\alpha_0}(f_2^*)||\Big\rbrace\\
   \leq \max\Big\lbrace  \varphi(\alpha_0)||f_1-f_1^*||, \widehat{\varphi}(\alpha_0)||f_2-f_2^*||\Big\rbrace \\
   \leq \max\Big\lbrace  \varphi(\alpha_0), \widehat{\varphi}(\alpha_0)\Big\rbrace ||\vec{f}-\vec{f^*}||:= \varepsilon(\alpha_0)||\vec{f}-\vec{f^*}||.
   \end{array}$$
   The function $\widehat{\varphi}$ given by \eqref{e57} decreases from $+\infty$ to 0 when $\alpha_0$ goes from 0 to  $+\infty$. Then there exists a unique $\alpha_{01}:=\widehat{\varphi}^{-1}(1)$ such that $\widehat{\varphi}(\alpha_0)<1$ for all $\alpha_0>\alpha_{01}$.

   In a similar manner, the function $\varphi$ given by \eqref{e56} increases from 0 to $+\infty$ when $\alpha_0$ goes from 0 to  $+\infty$. Then there exists a unique $\alpha_{02}:=\varphi^{-1}(1)$ such that $\varphi(\alpha_0)<1$ for all $\alpha_0<\alpha_{02}$.

   Therefore, if $\alpha_{01}<\alpha_{02}$, for each $\alpha_0\in(\alpha_{01},\alpha_{02})$ we have that  $\varepsilon(\alpha_0)<1$. As a consequence, the operator $\Psi^{\alpha_0}:\mathcal{K}\to\mathcal{K}$ is a contraction. From the fixed point Banach theorem it results that there exists a unique fixed point $\vec{f}$ of $\Psi^{\alpha_0}$.

   The assumption $\alpha_{01}<\alpha_{02}$ is equivalent to inequality $\varphi(\alpha_{01})<1$ that can  be rewritten as hypothesis \eqref{hipQ0}.
\end{proof}

\medskip

\begin{corol}
  For each $\alpha_0\in (\alpha_{01},\alpha_{02})$ there exists $f_1=f_1^{\alpha_0}\in C^0[0,\alpha_0]$ and $f_2=f_2^{\alpha_0}\in\mathcal{M}$ which are the fixed point of the operators $U^{\alpha_0}$ and $W^{\alpha_0}$ given by \eqref{e34} and \eqref{e35}, respectively. Therefore they are the unique solutions to the integral equations \eqref{e25} and \eqref{e26}.
\end{corol}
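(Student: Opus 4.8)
The plan is to read the statement off directly from Theorem \ref{teo:existfixedpoint}, exploiting the fact that the operator $\Psi^{\alpha_0}$ is \emph{decoupled}: by its definition in \eqref{Psi}, one has $\Psi^{\alpha_0}(\vec f) = (U^{\alpha_0}(f_1), W^{\alpha_0}(f_2))$, where $U^{\alpha_0}$ depends only on the first component and $W^{\alpha_0}$ only on the second. Consequently a pair $\vec f = (f_1, f_2)$ is a fixed point of $\Psi^{\alpha_0}$ if and only if $U^{\alpha_0}(f_1) = f_1$ and $W^{\alpha_0}(f_2) = f_2$ hold simultaneously.

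First I would fix $\alpha_0 \in (\alpha_{01}, \alpha_{02})$ and invoke Theorem \ref{teo:existfixedpoint}, which guarantees a unique fixed point $\vec f = (f_1, f_2) \in \mathcal{K} = C[0,\alpha_0] \times \mathcal{M}$ of $\Psi^{\alpha_0}$. By the product structure just noted, the identity $\Psi^{\alpha_0}(\vec f) = \vec f$ splits into $U^{\alpha_0}(f_1) = f_1$ and $W^{\alpha_0}(f_2) = f_2$, so that $f_1 =: f_1^{\alpha_0} \in C[0,\alpha_0]$ and $f_2 =: f_2^{\alpha_0} \in \mathcal{M}$ are fixed points of $U^{\alpha_0}$ and $W^{\alpha_0}$, respectively. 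Since the defining formulas \eqref{e34} and \eqref{e35} of these operators coincide with the right-hand sides of the integral equations \eqref{e25} and \eqref{e26}, each such fixed point is by construction a solution of the corresponding integral equation.

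For the uniqueness of each component separately, I would argue by contradiction, again using the decoupling: if $\tilde f_1 \in C[0,\alpha_0]$ were a second fixed point of $U^{\alpha_0}$ with $\tilde f_1 \ne f_1$, then $(\tilde f_1, f_2)$ would be a fixed point of $\Psi^{\alpha_0}$ distinct from $\vec f$, contradicting the uniqueness furnished by Theorem \ref{teo:existfixedpoint}; hence $\tilde f_1 = f_1$, and the analogous argument with $(f_1, \tilde f_2)$ yields uniqueness of $f_2^{\alpha_0}$. Alternatively, and perhaps more transparently, one may apply the Banach fixed point theorem to each operator in isolation: Lemma \ref{lem3} shows that $U^{\alpha_0}$ is Lipschitz with constant $\varphi(\alpha_0) < 1$ on the Banach space $C[0,\alpha_0]$, while Lemma \ref{lemW} shows that $W^{\alpha_0}$ is Lipschitz with constant $\widehat{\varphi}(\alpha_0) < 1$ on $\mathcal{M}$. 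Since $\mathcal{M}$ is a closed subset of $C_b[\alpha_0,+\infty)$, the constraints $f_2(\alpha_0)=0$ and $f_2(\infty)=-1$ being preserved under uniform limits, it is a complete metric space, and each contraction admits a unique fixed point.

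The argument is essentially immediate once Theorem \ref{teo:existfixedpoint} is in hand, so there is no genuine obstacle here; the only point requiring a moment of care is the passage from uniqueness of the \emph{pair} $\vec f$ in $\mathcal{K}$ to uniqueness of each \emph{scalar} component, which is precisely where the decoupled structure of $\Psi^{\alpha_0}$ must be invoked. Note also that the hypothesis \eqref{hipQ0} is implicitly present through the requirement $\alpha_0 \in (\alpha_{01},\alpha_{02})$, since it is exactly what guarantees that this interval is nonempty.
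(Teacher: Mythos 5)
Your argument is correct and matches the paper's intent exactly: the paper offers no separate proof of this corollary, treating it as an immediate consequence of Theorem \ref{teo:existfixedpoint}, and your observation that the decoupled product structure of $\Psi^{\alpha_0}$ lets the fixed-point identity split component-wise (with uniqueness of each component following either by the contradiction argument or by applying the Banach theorem to $U^{\alpha_0}$ and $W^{\alpha_0}$ separately via Lemmas \ref{lem3} and \ref{lemW}) is precisely the reasoning the authors leave implicit.
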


\subsection{Existence and uniqueness of $\alpha_0$}
Now it remains to analyse the existence and uniqueness of solution to the equation \eqref{e31} which can be written as:
\begin{equation}\label{ealpha0}
    \Phi(\alpha_0,f_1^{\alpha_0},f_2^{\alpha_0})=M^*\alpha_0^2
\end{equation}
where 
\begin{equation}\label{eq61}
    \Phi(\alpha_0,f_1^{\alpha_0},f_2^{\alpha_0})=D^*E_1(0,\alpha_0,f_1^{\alpha_0})- \dfrac{1}{F_2(\alpha_0,\infty,f_2^{\alpha_0})}.
\end{equation}

\begin{lemma}\label{lemCotasPhi}
    Suppose \eqref{e36}-\eqref{e43} and \eqref{hipQ0} hold, 
    then we have
    \begin{equation}\label{e61}
        \Phi_1(\alpha_0)\leq\Phi(\alpha_0,f_1^{\alpha_0},f_2^{\alpha_0})\leq\Phi_2(\alpha_0)
    \end{equation}
where
\begin{equation}\label{ePhi1}
        \Phi_1(\alpha_0):=D^*\exp\left(-\dfrac{\alpha_0^2aN_{1M}}{L_{1m}}\right)-  L_{2M} \beta \alpha_0^\beta \exp\left( \tfrac{a}{(\beta-\sigma-1)}\tfrac{N_{2M}}{L_{2m}}\tfrac{1}{\alpha_0^{\beta-\sigma-2}} \right)
    \end{equation}
    
    \begin{equation}\label{ePhi2}
        \Phi_2(\alpha_0):=D^*\exp\left(-\frac{\alpha_0^2aN_{1m}}{L_{1M}}\right).
    \end{equation}
\end{lemma}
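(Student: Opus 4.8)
The plan is to estimate the two summands of $\Phi$ in \eqref{eq61} separately, feeding into them the pointwise bounds of Lemma \ref{lem1} evaluated at the endpoint $\eta=\alpha_0$ and in the limit $\xi\to\infty$. Note first that, under the standing hypotheses \eqref{e36}--\eqref{e43} together with \eqref{hipQ0} and $\alpha_0\in(\alpha_{01},\alpha_{02})$, the preceding Corollary guarantees that the fixed points $f_1^{\alpha_0}\in C[0,\alpha_0]$ and $f_2^{\alpha_0}\in\mathcal{M}$ exist; since the estimates \eqref{e44} and \eqref{e47} hold for \emph{every} admissible argument, they apply verbatim to $f_1^{\alpha_0}$ and $f_2^{\alpha_0}$, so no further property of the fixed points is needed beyond membership in the respective spaces.

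For the upper bound I would first record that $F_2(\alpha_0,\infty,f_2^{\alpha_0})>0$: letting $\xi\to\infty$ in the lower estimate \eqref{e47} (legitimate because $\tfrac{1}{\alpha_0^\beta}-\tfrac{1}{\xi^\beta}\uparrow\tfrac{1}{\alpha_0^\beta}$ while the upper estimate in \eqref{e47} stays bounded, so the improper integral converges) yields $F_2(\alpha_0,\infty,f_2^{\alpha_0})\geq \tfrac{1}{L_{2M}\beta}\exp(\,\cdots)\,\tfrac{1}{\alpha_0^\beta}>0$. Hence the term $-1/F_2(\alpha_0,\infty,f_2^{\alpha_0})$ is strictly negative and may simply be dropped, giving $\Phi\leq D^*E_1(0,\alpha_0,f_1^{\alpha_0})$. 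Inserting the upper estimate in \eqref{e44} at $\eta=\alpha_0$ (which carries the combination $N_{1m}/L_{1M}$) then produces $\Phi_2(\alpha_0)$ as in \eqref{ePhi2}.

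For the lower bound I would estimate each summand from below. The first is controlled by the lower estimate in \eqref{e44} at $\eta=\alpha_0$ (carrying $N_{1M}/L_{1m}$), giving $D^*E_1(0,\alpha_0,f_1^{\alpha_0})\geq D^*\exp\!\big(-\tfrac{\alpha_0^2 aN_{1M}}{L_{1m}}\big)$. For the second, I would pass to the limit $\xi\to\infty$ in the lower estimate \eqref{e47}, obtaining the positive lower bound for $F_2(\alpha_0,\infty,f_2^{\alpha_0})$ above; inverting turns it into the upper bound $1/F_2(\alpha_0,\infty,f_2^{\alpha_0})\leq L_{2M}\beta\alpha_0^\beta\exp\!\big(\tfrac{a}{\beta-\sigma-1}\tfrac{N_{2M}}{L_{2m}}\tfrac{1}{\alpha_0^{\beta-\sigma-2}}\big)$, and therefore the lower bound $-1/F_2(\alpha_0,\infty,f_2^{\alpha_0})\geq -L_{2M}\beta\alpha_0^\beta\exp(\,\cdots)$, which is exactly the second summand of $\Phi_1$ in \eqref{ePhi1}. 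Adding the two lower bounds gives the left inequality in \eqref{e61}.

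The argument is thus a direct substitution of Lemma \ref{lem1} and requires no new estimates; the two points deserving care are the limit passages $\xi\to\infty$ (finiteness and positivity of $F_2(\alpha_0,\infty,\cdot)$, already furnished by \eqref{e47}) and the bookkeeping of the exponents. On the $F_2$ side the match is clean: the $\alpha_0$ from the prefactor of $E_2$ absorbs one power to give $\tfrac{1}{\alpha_0^{\beta-\sigma-2}}$, reproducing \eqref{ePhi1} exactly. On the $E_1$ side, I expect the genuine obstacle: evaluating the exponent of \eqref{e44} at $\eta=\alpha_0$ yields $\alpha_0\cdot\tfrac{\alpha_0^{\mu-\nu+1}}{\mu-\nu+1}=\tfrac{\alpha_0^{\mu-\nu+2}}{\mu-\nu+1}$ rather than the $\alpha_0^2$ written in \eqref{ePhi1}--\eqref{ePhi2}. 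These agree only when $\mu-\nu=1$, so the cleanest route is to state $\Phi_1,\Phi_2$ with the exponent $\tfrac{\alpha_0^{\mu-\nu+2}}{\mu-\nu+1}$ coming out of Lemma \ref{lem1}, or else to specialize to $\mu-\nu=1$; reconciling this power is the only step where the stated constants must be re-examined, the rest being routine.
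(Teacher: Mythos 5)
Your argument is exactly the paper's: the published proof consists of the single line ``It follows immediately from Lemma \ref{lem1}'', and your elaboration (drop the negative term $-1/F_2$ for the upper bound; bound $E_1$ from below via \eqref{e44} and $1/F_2(\alpha_0,\infty,\cdot)$ from above via the $\xi\to\infty$ limit of the lower estimate in \eqref{e47} for the lower bound) is the correct way to fill it in, and your $F_2$ bookkeeping reproduces the second summand of \eqref{ePhi1} exactly. The exponent mismatch you flag on the $E_1$ side is genuine: Lemma \ref{lem1} at $\eta=\alpha_0$ yields $\exp\bigl(-\tfrac{aN_{1M}}{L_{1m}}\tfrac{\alpha_0^{\mu-\nu+2}}{\mu-\nu+1}\bigr)$, not $\exp\bigl(-\tfrac{\alpha_0^{2}aN_{1M}}{L_{1m}}\bigr)$, and the paper's one-line proof silently glosses over this. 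Your reconciliation condition is off, however: $\tfrac{\alpha_0^{\mu-\nu+2}}{\mu-\nu+1}=\alpha_0^{2}$ forces $\mu=\nu$, not $\mu-\nu=1$, and since the standing hypothesis requires $\mu>\nu$ this case is excluded; so the constants in \eqref{ePhi1}--\eqref{ePhi2} as printed do not follow from Lemma \ref{lem1}, and the correct statement should carry the exponent $\tfrac{\alpha_0^{\mu-\nu+2}}{\mu-\nu+1}$ (which changes nothing downstream, since only positivity, monotonicity-type behaviour and the values at $\alpha_{01},\alpha_{02}$ of $\Phi_1,\Phi_2$ are used in Theorem \ref{thmExistenciaalpha0}).
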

\begin{proof}
   It follows immediately from Lemma \ref{lem1}.
\end{proof}

\begin{theorem}\label{thmExistenciaalpha0}
     Suppose \eqref{e36}-\eqref{e43} and \eqref{hipQ0} hold. If 
     \begin{equation}\label{hipPhi1Phi2}
         \Phi_2(\alpha_{01})<M^* \alpha_{01}^2,\qquad \Phi_1(\alpha_{02})<M^* \alpha_{02}^2,
     \end{equation}
     then there exists at least one solution $\alpha_0^*\in(\alpha_{01},\alpha_{02})$ to the equation \eqref{ealpha0}.
\end{theorem}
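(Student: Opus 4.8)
The plan is to reduce \eqref{ealpha0} to a scalar root-finding problem and to apply Bolzano's intermediate value theorem. For $\alpha_0\in(\alpha_{01},\alpha_{02})$ the Corollary to Theorem \ref{teo:existfixedpoint} supplies the unique fixed points $f_1^{\alpha_0}$ and $f_2^{\alpha_0}$, so I may define
$$G(\alpha_0):=\Phi(\alpha_0,f_1^{\alpha_0},f_2^{\alpha_0})-M^*\alpha_0^2,\qquad \alpha_0\in(\alpha_{01},\alpha_{02}),$$
with $\Phi$ as in \eqref{eq61}. A zero of $G$ is exactly a solution of \eqref{ealpha0}, so the argument comes down to showing that $G$ is continuous and changes sign.

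The step I expect to be the main obstacle is the continuity of $G$, and more precisely the continuity of the parameter-to-solution map $\alpha_0\mapsto(f_1^{\alpha_0},f_2^{\alpha_0})$. Once this is available, continuity of $G$ follows because $\Phi$ in \eqref{eq61} is an explicit expression in $E_1(0,\alpha_0,f_1^{\alpha_0})$ and $F_2(\alpha_0,\infty,f_2^{\alpha_0})$: the bound \eqref{e47} keeps $F_2(\alpha_0,\infty,\cdot)$ bounded away from $0$ so that $1/F_2$ is well behaved, and Lemma \ref{lem2} already controls the dependence of $E_1$ and $F_2$ on the functional arguments. The difficulty in the parameter-to-solution map is twofold: the kernels \eqref{e27}-\eqref{e30} depend on $\alpha_0$, and the spaces $C[0,\alpha_0]$ and $C_b[\alpha_0,\infty)$ themselves move with $\alpha_0$. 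To neutralize the moving domains I would rescale, e.g. $\eta=\alpha_0\zeta$ with $\zeta\in[0,1]$ on the liquid side and a change of variable mapping $[\alpha_0,\infty)$ onto a fixed half-line on the solid side, so that the fixed-point operators act on $\alpha_0$-independent Banach spaces with $\alpha_0$ entering only through the integrands. I would then invoke the uniform contraction principle: the estimates behind Lemmas \ref{lem3} and \ref{lemW} show that, on any compact subinterval of $(\alpha_{01},\alpha_{02})$, the contraction constant $\varepsilon(\alpha_0)\le\max\{\varphi(\alpha_0),\widehat\varphi(\alpha_0)\}$ stays strictly below $1$, while $\alpha_0\mapsto\Psi^{\alpha_0}(\vec f)$ is continuous for each fixed $\vec f$; together these yield continuous dependence of the fixed point $\vec f^{\alpha_0}$ on $\alpha_0$.

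For the sign change I would use Lemma \ref{lemCotasPhi}, which sandwiches $\Phi_1(\alpha_0)\le\Phi(\alpha_0,f_1^{\alpha_0},f_2^{\alpha_0})\le\Phi_2(\alpha_0)$. Since $\Phi_1$ and $\Phi_2$ in \eqref{ePhi1}-\eqref{ePhi2} are explicit continuous functions of $\alpha_0$, the strict inequalities \eqref{hipPhi1Phi2} persist on one-sided neighbourhoods of the endpoints, so I can select interior points $\alpha_{01}<\underline\alpha<\overline\alpha<\alpha_{02}$ at which they still hold; this also sidesteps the fact that the fixed points are only guaranteed on the open interval $(\alpha_{01},\alpha_{02})$. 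The upper bound then controls $G$ near $\alpha_{01}$, giving $G(\underline\alpha)\le\Phi_2(\underline\alpha)-M^*\underline\alpha^2<0$, while the lower bound controls $G$ near $\alpha_{02}$, giving $G(\overline\alpha)\ge\Phi_1(\overline\alpha)-M^*\overline\alpha^2$; for the two endpoint values to carry opposite signs the second condition in \eqref{hipPhi1Phi2} must be read as $\Phi_1(\overline\alpha)>M^*\overline\alpha^2$ (the inequality needed for a genuine sign change), whence $G(\overline\alpha)>0$.

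With $G$ continuous on $[\underline\alpha,\overline\alpha]$ and $G(\underline\alpha)<0<G(\overline\alpha)$, Bolzano's theorem furnishes at least one $\alpha_0^*\in(\underline\alpha,\overline\alpha)\subset(\alpha_{01},\alpha_{02})$ with $G(\alpha_0^*)=0$, that is, a solution of \eqref{ealpha0}, as claimed. Note that this argument gives existence only; uniqueness is not expected without additional monotonicity of $\Phi(\alpha_0,f_1^{\alpha_0},f_2^{\alpha_0})-M^*\alpha_0^2$, which is why the statement asserts only "at least one" solution.
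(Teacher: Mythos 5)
Your argument is essentially the paper's: the authors also set $R(\alpha_0,f_1^{\alpha_0},f_2^{\alpha_0}):=\Phi(\alpha_0,f_1^{\alpha_0},f_2^{\alpha_0})-M^*\alpha_0^2$, use the sandwich of Lemma \ref{lemCotasPhi} to get opposite signs at the two ends of $(\alpha_{01},\alpha_{02})$, and conclude by Bolzano's theorem. Two of your observations are worth recording. First, you are right that the second inequality in \eqref{hipPhi1Phi2} must be read as $\Phi_1(\alpha_{02})>M^*\alpha_{02}^2$: the paper's own proof asserts $R(\alpha_{02})\geq\Phi_1(\alpha_{02})-M^*\alpha_{02}^2>0$, which is incompatible with the hypothesis as printed, so the statement contains a sign typo that you correctly diagnosed. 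Second, the paper's one-line proof silently assumes continuity of $\alpha_0\mapsto\Phi(\alpha_0,f_1^{\alpha_0},f_2^{\alpha_0})$ and evaluates $R$ at the endpoints $\alpha_{01},\alpha_{02}$ where the fixed points are not guaranteed to exist; your rescaling to fixed domains plus the uniform contraction principle, and your retreat to interior points $\underline\alpha,\overline\alpha$, supply exactly the justification that is missing. The continuity step is only sketched in your proposal, but the plan is sound and is the standard way to obtain continuous dependence of a fixed point on a parameter.
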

\begin{proof}
    If we define $R(\alpha_0,f_1^{\alpha_0},f_2^{\alpha_0}):=\Phi(\alpha_0,f_1^{\alpha_0},f_2^{\alpha_0})-M^*\alpha_0^2$ then we have
    $R(\alpha_{01})<\Phi_2(\alpha_{01})-M^*\alpha_{01}^2<0$ and $R(\alpha_{02})>\Phi_1(\alpha_{02})-M^*\alpha_{02}^2>0$. By Bolzano theorem there exists at least one $\alpha_0^*\in(\alpha_{01},\alpha_{02})$ solution to  $R(\alpha_0,f_1^{\alpha_0},f_2^{\alpha_0})=0$, i.e. equation \eqref{ealpha0}.
\end{proof}

\begin{theorem}
    Suppose \eqref{e36}-\eqref{e43} and \eqref{hipQ0} hold. If we assume 
    \begin{equation}\label{A1}
    A_1:=\Bar{E_1}(\alpha_{02})\frac{\alpha_{02}^{\mu}}{L_{1m}\mu}+\frac{\Bar{L_1}}{L_{1m}^2}\frac{\alpha_{02}^{2\mu}}{2\mu}<\frac{1}{D^*}
\end{equation}
\begin{equation}\label{A2}
    A_2:= 2 \Bar{F_2}(\alpha_{01}) L_{2M} \beta    \alpha_{02}^\beta \exp\left( \tfrac{a N_{2M}}{(\beta-\sigma-1)L_{2m} \alpha_{01}^{\beta-\sigma-2}} \right)<1
\end{equation}
and
\begin{equation}\label{cotaM*}
2\alpha_{01}M^* - D^* B_1-B_2>0
\end{equation}
where 
  \begin{equation}\label{B1}
      B_1=  \Bar{E_1}(\alpha_{02}) \frac{D^* }{(1-D^* A_1)}\frac{\alpha_{02}^{\mu-1}}{L_{1m}} +2a  \frac{N_{1M}}{L_{1m}} \frac{\alpha_{02}^{\mu-\nu+1}}{\mu-\nu+1}
    \end{equation}
    and
    \begin{equation}\label{B2}
    \begin{split}
    B_2=\Bigg(\dfrac{2\Bar{F_2}(\alpha_{02}) L_{2M} \beta    \alpha_{02}^\beta \exp\left( \tfrac{a N_{2M}}{(\beta-\sigma-1)L_{2m} \alpha_{01}^{\beta-\sigma-2}} \right) }{L_{2m} \alpha_{01}^{\beta-1}(1-A_2)}\\+   \frac{1}{L_{2m} \alpha_{01}^{\beta+1}} \Bigg) L_{2M}^2 \alpha_{02}^{2\beta} \exp\left( \tfrac{2a N_{2M}}{(\beta-\sigma-1) L_{2m}\alpha_{01}^{\beta-\sigma-2}} \right)
    \end{split}
\end{equation}
then the solution to equation \eqref{ealpha0} is unique.
\end{theorem}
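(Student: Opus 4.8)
The plan is to prove uniqueness by a Lipschitz--contradiction argument. By the Corollary following Theorem \ref{teo:existfixedpoint}, for each $\alpha_0\in(\alpha_{01},\alpha_{02})$ the pair $(f_1^{\alpha_0},f_2^{\alpha_0})$ is uniquely determined, so equation \eqref{ealpha0} becomes a genuine scalar equation $\Phi(\alpha_0,f_1^{\alpha_0},f_2^{\alpha_0})=M^*\alpha_0^2$ in the single unknown $\alpha_0$. The whole strategy is to show that the left--hand side, as a function of $\alpha_0$, is Lipschitz with constant $D^*B_1+B_2$, while the right--hand side $M^*\alpha_0^2$ grows with local rate at least $2M^*\alpha_{01}$ on the interval; hypothesis \eqref{cotaM*} says precisely that the latter dominates the former, which forces the two curves to meet at most once.

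First I would establish Lipschitz dependence of the fixed points on the parameter: for $\alpha_0,\tilde\alpha_0\in(\alpha_{01},\alpha_{02})$, bounds $\|f_1^{\tilde\alpha_0}-f_1^{\alpha_0}\|\le C_1|\tilde\alpha_0-\alpha_0|$ and $\|f_2^{\tilde\alpha_0}-f_2^{\alpha_0}\|\le C_2|\tilde\alpha_0-\alpha_0|$. Starting from the fixed--point identities $f_1^{\alpha_0}=U^{\alpha_0}(f_1^{\alpha_0})$ and $f_2^{\alpha_0}=W^{\alpha_0}(f_2^{\alpha_0})$, I would split each difference into an \emph{explicit} part (the dependence of $E_1,E_2,F_1,F_2$ on $\alpha_0$ through the weight $\alpha_0 a$ in \eqref{e29}--\eqref{e30} and through the integration limits), bounded with Lemma \ref{lem1}, and an \emph{implicit} part (the dependence through $f_i^{\alpha_0}$ itself), bounded with Lemma \ref{lem2}. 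The implicit part reproduces $\|f_i^{\tilde\alpha_0}-f_i^{\alpha_0}\|$ on the right with coefficient $D^*A_1$ for $i=1$ and $A_2$ for $i=2$; conditions \eqref{A1} and \eqref{A2} ($D^*A_1<1$ and $A_2<1$) are exactly what lets one absorb that term and solve for the Lipschitz constants, producing the factors $\tfrac{D^*}{1-D^*A_1}$ and $\tfrac{1}{1-A_2}$ that one sees inside \eqref{B1} and \eqref{B2}. I expect this to be the main obstacle: $\alpha_0$ enters the kernels in three distinct ways, and the self--referential inequality for $\|f_i^{\tilde\alpha_0}-f_i^{\alpha_0}\|$ only closes because of these smallness hypotheses.

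Next, using these parameter--Lipschitz estimates together with Lemmas \ref{lem1} and \ref{lem2}, I would derive the key bound $|\Phi(\tilde\alpha_0,f_1^{\tilde\alpha_0},f_2^{\tilde\alpha_0})-\Phi(\alpha_0,f_1^{\alpha_0},f_2^{\alpha_0})|\le (D^*B_1+B_2)\,|\tilde\alpha_0-\alpha_0|$ on $(\alpha_{01},\alpha_{02})$. The term $D^*B_1$ controls the total variation of $D^*E_1(0,\alpha_0,f_1^{\alpha_0})$, combining its explicit $\alpha_0$--dependence (the second summand of \eqref{B1}) with the implicit dependence through $f_1^{\alpha_0}$ carrying the factor $\tfrac{D^*}{1-D^*A_1}$; the term $B_2$ controls the variation of $1/F_2(\alpha_0,\infty,f_2^{\alpha_0})$, again split into explicit and implicit contributions, where the lower bound \eqref{e47} on $F_2$ is used to control the reciprocal and its differences.

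Finally I would conclude by contradiction. If $\alpha_0\ne\tilde\alpha_0$ were two solutions of \eqref{ealpha0} in $(\alpha_{01},\alpha_{02})$, subtracting the two identities gives $|\Phi(\tilde\alpha_0,\dots)-\Phi(\alpha_0,\dots)|=M^*(\alpha_0+\tilde\alpha_0)\,|\tilde\alpha_0-\alpha_0|>2M^*\alpha_{01}\,|\tilde\alpha_0-\alpha_0|$, since each solution exceeds $\alpha_{01}$ and $M^*>0$. Comparing with the Lipschitz bound of the previous step and dividing by $|\tilde\alpha_0-\alpha_0|>0$ yields $2M^*\alpha_{01}<D^*B_1+B_2$, i.e. $2\alpha_{01}M^*-D^*B_1-B_2<0$, contradicting \eqref{cotaM*}. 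Hence the solution $\alpha_0$ to \eqref{ealpha0} is unique.
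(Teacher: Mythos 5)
Your proposal is correct and follows essentially the same route as the paper's own proof: both split the variation of $\Phi$ into the explicit dependence on $\alpha_0$ (through the kernels and integration limits) and the implicit dependence through the fixed points $f_i^{\alpha_0}$, use \eqref{A1} and \eqref{A2} to close the self-referential inequalities and obtain the Lipschitz factors $\tfrac{D^*}{1-D^*A_1}$ and $\tfrac{1}{1-A_2}$ appearing in $B_1$ and $B_2$, and then compare the resulting Lipschitz constant $D^*B_1+B_2$ with the lower bound $2M^*\alpha_{01}$ on the growth of $M^*\alpha_0^2$. The only cosmetic difference is that you phrase the conclusion as a strict contradiction, whereas the paper derives $(2\alpha_{01}M^*-D^*B_1-B_2)\,|\alpha_0-\tilde{\alpha}_0|\leq 0$ and invokes \eqref{cotaM*} directly.
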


\begin{proof}

Let us suppose that $\alpha_0, \tilde{\alpha_0}\in(\alpha_{01},\alpha_{02})$ are two solutions to equation  \eqref{ealpha0} with $\alpha_0\leq \tilde{\alpha_0}$. Then we have
\begin{equation}\label{ecdif}
    \begin{array}{ll}
M^*|\alpha_0^2-\tilde{\alpha_0}^2|= | \Phi(\alpha_0,f_1^{\alpha_0},f_2^{\alpha_0})- \Phi(\tilde{\alpha_0},f_1^{\tilde{\alpha_0}},f_2^{\tilde{\alpha_0}})|\\ \\
\leq D^* |E_1(0,\alpha_0,f_1^{\alpha_0})-E_1(0,\tilde{\alpha_0},f_1^{\tilde{\alpha_0}})|+ \left| \frac{1}{F_2(\alpha_0,\infty,f_2^{\alpha_0})}-\frac{1}{F_2(\tilde{\alpha_0},\infty,f_2^{\tilde{\alpha_0}})}\right| \\ \\
:= D^* H_1+ H_2.
\end{array}
\end{equation}
On one hand, taking into account that $|\exp(-x)-\exp(-y)|\leq |x-y|$ and inequality \eqref{e49}  we have
$$\begin{array}{ll}
   H_1\leq   a \left|\alpha_0\displaystyle\int\limits_0^{\alpha_0}\dfrac{N^*(f_1^{\alpha_0}(s))}{L^*(f_1^{\alpha_0}(s))}ds-\tilde{\alpha_0} \int\limits_0^{\tilde{\alpha_0}}\dfrac{N^*(f_1^{\tilde{\alpha_0}}(s))}{L^*(f_1^{\tilde{\alpha_0}}(s))}ds\right|  \\ \\
   \leq  \left|a \alpha_0 \displaystyle\int\limits_0^{\alpha_0} \dfrac{N^*(f_1^{\alpha_0}(s))}{L^*(f_1^{\alpha_0}(s))}-\dfrac{N^*(f_1^{\tilde{\alpha_0}}(s))}{L^*(f_1^{\tilde{\alpha_0}}(s))} ds \right| + \left| a (\alpha_0-\tilde{\alpha_0})\displaystyle\int_0^{\alpha_0} \dfrac{N^*(f_1^{\tilde{\alpha_0}}(s))}{L^*(f_1^{\tilde{\alpha_0}}(s))} ds \right|\\ \\
   + \left|a \tilde{\alpha_0}  \displaystyle\int_{\alpha_0}^{\tilde{\alpha_0}}\dfrac{N^*(f_1^{\tilde{\alpha_0}}(s))}{L^*(f_1^{\tilde{\alpha_0}}(s))} ds\right|
  \\ \\
  \leq \Bar{E_1}(\alpha_0) ||f_1^{\alpha_0}-f_1^{\tilde{\alpha_0}}||_{C[0,\alpha_0]}+a  \tfrac{N_{1M}}{L_{1m}} \tfrac{\alpha_0^{\mu-\nu+1}}{\mu-\nu+1} |\alpha_0-\tilde{\alpha_0}|   +a   \tfrac{N_{1M}}{L_{1m}} \tfrac{\tilde{\alpha_0}^{\mu-\nu+1}}{\mu-\nu+1}  |\alpha_0-\tilde{\alpha_0}| \\ \\
     \leq \Bar{E_1}(\alpha_0) ||f_1^{\alpha_0}-f_1^{\tilde{\alpha_0}}||_{C[0,\alpha_0]}+2a  \tfrac{N_{1M}}{L_{1m}} \tfrac{\alpha_{02}^{\mu-\nu+1}}{\mu-\nu+1} |\alpha_0-\tilde{\alpha_{0}}| 
\end{array}$$
Notice that $f_1^{\alpha_0}=U^{\alpha_0}(f_1^{\alpha_0})$ and $f_1^{\tilde{\alpha_0}}=U^{\tilde{\alpha_0}}(f_1^{\tilde{\alpha_0}})$ where $U^{\alpha_0}$ and $U^{\tilde{\alpha_0}}$ are given by \eqref{e34} for $\alpha_0$ and $\tilde{\alpha_0}$, respectively. 
Then
$$\begin{array}{ll}|f_1^{\alpha_0}(\eta)-f_1^{\tilde{\alpha_0}}(\eta)|=|U^{\alpha_0}(f_1^{\alpha_0}(\eta))-U^{\tilde{\alpha_0}}(f_1^{\tilde{\alpha_0}}(\eta))| \\ \\
\leq D^* \left|\displaystyle\int\limits_{\eta}^{\alpha_0}\dfrac{E_1(0,s,f_1^{\alpha_0})}{s L^*(f_1^{\alpha_0}(s))}ds -\displaystyle\int\limits_{\eta}^{\tilde{\alpha_0}}\dfrac{E_1(0,s,f_1^{\tilde{\alpha_0}})}{s L^*(f_1^{\tilde{\alpha_0}}(s))}ds \right|  \\ \\
\leq D^* \left|\displaystyle\int\limits_{\eta}^{\alpha_0}\dfrac{E_1(0,s,f_1^{\alpha_0})}{s L^*(f_1^{\alpha_0}(s))}- \dfrac{E_1(0,s,f_1^{\tilde{\alpha_0}})}{s L^*(f_1^{\tilde{\alpha_0}}(s))}ds\right|+ D^*\left|\displaystyle\int\limits_{\alpha_0}^{\tilde{\alpha_0}}\dfrac{E_1(0,s,f_1^{\tilde{\alpha_0}})}{s L^*(f_1^{\tilde{\alpha_0}}(s))}ds \right| \\ \\
\end{array}
 $$

 $$\begin{array}{ll}
\leq D^* \left|\displaystyle\int\limits_{\eta}^{\alpha_0}\frac{|E_1(0,s,f_1)-E_1(0,s,f_1^{\tilde{\alpha_0}})|}{|L^*(f_1^{\alpha_0}(s))|}\frac{ds}{s}\right|\\ \\
+
D^*\left|\displaystyle\int\limits_{\eta}^{\alpha_0}\dfrac{|L^*(f_1^{\tilde{\alpha_0}}(s))-L^*(f_1^{\alpha_0}(s))|}{|L^*(f_1^{\tilde{\alpha_0}}(s))||L^*(f_1^{\alpha_0}(s))|}|E_1(0,s,f_1^{\tilde{\alpha_0}})|\dfrac{ds}{s}\right|\\ \\
+D^*\left|\displaystyle\int\limits_{\alpha_0}^{\tilde{\alpha_0}}\dfrac{E_1(0,s,f_1^{\tilde{\alpha_0}})}{s L^*(f_1^{\tilde{\alpha_0}}(s))}ds \right|\\ \\

\leq D^* \left(\Bar{E_1}(\alpha_{02})\frac{\alpha_{02}^{\mu}}{L_{1m}\mu}+\frac{\Bar{L_1}}{L_{1m}^2}\frac{\alpha_{02}^{2\mu}}{2\mu}\right) ||f_1^{\alpha_0}-f_1^{\tilde{\alpha_0}}||_{C[0,\alpha_0]}+D^* \frac{\alpha_{02}^{\mu-1}}{L_{1m}}|\alpha_0-\tilde{\alpha_0}|.
\end{array}$$
As a consequence, from assumption \eqref{A1} we obtain
$$||f_1^{\alpha_0}-f_1^{\tilde{\alpha_0}}||_{C[0,\alpha_0]}\leq \dfrac{D^* }{(1-D^* A_1)}\frac{\alpha_{02}^{\mu-1}}{L_{1m}}|\alpha_0-\tilde{\alpha_0}|. $$
Therefore it follows that
\begin{equation}\label{ecH1}
    H_1\leq B_1|\alpha_0-\tilde{\alpha_{0}}| 
    \end{equation}
    where $B_1$ is given by \eqref{B1}.
  
On the other hand,

$$\begin{array}{ll}H_2=  \dfrac{\left|\displaystyle\int\limits_{\tilde{\alpha_0}}^{+\infty}\tfrac{E_2(\tilde{\alpha_0},s,f_2^{\tilde{\alpha_0}})}{s L^*(f_2^{\tilde{\alpha_0}}(s))}ds -\displaystyle\int\limits_{\alpha_0}^{+\infty}\tfrac{E_2(\alpha_0,s,f_2^{\alpha_0})}{s L^*(f_2^{\alpha_0}(s))}ds \right| }{F_2(\alpha_0,\infty,f_2^{\alpha_0}) F_2(\tilde{\alpha_0},\infty,f_2^{\tilde{\alpha_0}})}\\ \\
\leq \dfrac{\left|\displaystyle\int\limits_{\tilde{\alpha_0}}^{+\infty}\tfrac{E_2(\tilde{\alpha_0},s,f_2^{\tilde{\alpha_0}})}{s L^*(f_2^{\tilde{\alpha_0}}(s))}-\tfrac{E_2(\alpha_0,s,f_2^{\alpha_0})}{s L^*(f_2^{\alpha_0}(s))}ds\right|+\left|\displaystyle\int\limits_{\alpha_0}^{\tilde{\alpha_0}}\tfrac{E_2(\alpha_0,s,f_2^{\alpha_0})}{s L^*(f_2^{\alpha_0}(s))}ds \right| }{F_2(\alpha_0,\infty,f_2^{\alpha_0}) F_2(\tilde{\alpha_0},\infty,f_2^{\tilde{\alpha_0}})}\\ \\
\leq \left(\Bar{F_2}(\alpha_{02})||f_2^{\alpha_0}-f_2^{\tilde{\alpha_0}}||_{C_b[\tilde{\alpha_0},+\infty)}+   \frac{|\alpha_0-\tilde{\alpha_0}|}{L_{2m} \alpha_{01}^{\beta+1}} \right) L_{2M}^2 \alpha_{02}^{2\beta} \exp\left( \tfrac{2a N_{2M}}{(\beta-\sigma-1) L_{2m}\alpha_{01}^{\beta-\sigma-2}} \right) 
\end{array}$$

Notice that $f_2^{\alpha_0}=W^{\alpha_0}(f_2^{\alpha_0})$ and $f_2^{\tilde{\alpha_0}}=W^{\tilde{\alpha_0}}(f_2^{\tilde{\alpha_0}})$ where $W^{\alpha_0}$ and $W^{\tilde{\alpha_0}}$ are given by \eqref{e35} for $\alpha_0$ and $\tilde{\alpha_0}$, respectively.  Then for $\xi\geq \tilde{\alpha_0}$ we obtain

$$\begin{array}{ll}
|f_2^{\alpha_0}(\xi)-f_2^{\tilde{\alpha_0}}(\xi)|=|W^{\alpha_0}(f_2^{\alpha_0})(\xi)-W^{\tilde{\alpha_0}}(f_2^{\tilde{\alpha_0}})(\xi)|\\ \\
= \Bigg|\dfrac{F_2(\alpha_0,\xi,f_2^{\alpha_0})}{F_2(\alpha_0,\infty,f_2^{\alpha_0})}-\dfrac{F_2(\tilde{\alpha_0},\xi,f_2^{\tilde{\alpha_0}})}{F_2(\tilde{\alpha_0},\infty,f_2^{\tilde{\alpha_0}})}\Bigg|\\ \\
\leq \dfrac{|F_2(\alpha_0,\xi,f_2^{\alpha_0})-F_2(\tilde{\alpha_0},\xi,f_2^{\tilde{\alpha_0}})|}{|F_2(\alpha_0,\infty,f_2^{\alpha_0})|}+\dfrac{|F_2(\tilde{\alpha_0},\xi,f_2^{\tilde{\alpha_0}})-F_2(\alpha_0,\infty,f_2^{\alpha_0})|}{|F_2(\alpha_0,\infty,f_2^{\alpha_0})|}\\ \\
\leq \dfrac{\Bigg| \displaystyle\int\limits_{\alpha_0}^{\xi}\dfrac{E_2(\alpha_0,s,f_2^{\alpha_0})}{L^*(f_2^{\alpha_0}(s))}\dfrac{ds}{s} -\displaystyle\int\limits_{\tilde{\alpha_0}}^{\xi} \dfrac{E_2(\alpha_0,s,f_2^{\tilde{\alpha_0}})}{L^*(f_2^{\tilde{\alpha_0}}(s))}\dfrac{ds}{s}\Bigg|}{|F_2(\alpha_0,\infty,f_2^{\alpha_0})|}\\ \\
+\dfrac{\Bigg| \displaystyle\int\limits_{\alpha_0}^{\infty}\dfrac{E_2(\alpha_0,s,f_2^{\alpha_0})}{L^*(f_2^{\alpha_0}(s))}\dfrac{ds}{s} -\displaystyle\int\limits_{\tilde{\alpha_0}}^{\infty} \dfrac{E_2(\alpha_0,s,f_2^{\tilde{\alpha_0}})}{L^*(f_2^{\tilde{\alpha_0}}(s))}\dfrac{ds}{s}\Bigg|}{|F_2(\alpha_0,\infty,f_2^{\alpha_0})|}\\ \\
\leq \dfrac{\Bigg| \displaystyle\int\limits_{\alpha_0}^{\tilde{\alpha_0}}\dfrac{E_2(\alpha_0,s,f_2^{\alpha_0})}{L^*(f_2^{\alpha_0}(s))}\dfrac{ds}{s}\Bigg| + \Bigg|\displaystyle\int\limits_{\tilde{\alpha_0}}^{\xi} \left(\dfrac{E_2(\alpha_0,s,f_2^{\tilde{\alpha_0}})}{L^*(f_2^{\tilde{\alpha_0}}(s))}-\dfrac{E_2(\alpha_0,s,f_2^{\alpha_0})}{L^*(f_2^{\alpha_0}(s))}\right)\dfrac{ds}{s}\Bigg|}{|F_2(\alpha_0,\infty,f_2^{\alpha_0})|}\\ \\
+ \dfrac{\Bigg| \displaystyle\int\limits_{\alpha_0}^{\tilde{\alpha_0}}\dfrac{E_2(\alpha_0,s,f_2^{\alpha_0})}{L^*(f_2^{\alpha_0}(s))}\dfrac{ds}{s}\Bigg| + \Bigg|\displaystyle\int\limits_{\tilde{\alpha_0}}^{\infty} \left(\dfrac{E_2(\alpha_0,s,f_2^{\tilde{\alpha_0}})}{L^*(f_2^{\tilde{\alpha_0}}(s))}-\dfrac{E_2(\alpha_0,s,f_2^{\alpha_0})}{L^*(f_2^{\alpha_0}(s))}\right)\dfrac{ds}{s}\Bigg|}{|F_2(\alpha_0,\infty,f_2^{\alpha_0})|}\\ \\
\leq 2\left(  \dfrac{|\alpha_0-\tilde{\alpha_0}|}{L_{2m} \alpha_{01}^{\beta-1}}+\Bar{F_2}(\alpha_{01})||f_2^{\alpha_0}-f_2^{\tilde{\alpha_0}}||_{C_b[\tilde{\alpha_0},+\infty)}\right) L_{2M} \beta    \alpha_{02}^\beta \exp\left( \tfrac{a N_{2M}}{(\beta-\sigma-1)L_{2m} \alpha_{01}^{\beta-\sigma-2}} \right)
\end{array}$$
If we assume \eqref{A2}
then
\begin{equation}
    ||f_2^{\alpha_0}-f_2^{\tilde{\alpha_0}}||_{C_b[\tilde{\alpha_0},+\infty)}\leq  \dfrac{2|\alpha_0-\tilde{\alpha_0}|L_{2M} \beta    \alpha_{02}^\beta \exp\left( \tfrac{a N_{2M}}{(\beta-\sigma-1)L_{2m} \alpha_{01}^{\beta-\sigma-2}} \right)}{L_{2m} \alpha_{01}^{\beta-1}(1-A_2)}
\end{equation}
and therefore
\begin{equation}\label{ecH2}
H_2\leq B_2 |\alpha_0-\tilde{\alpha_0}|
\end{equation}
with $B_2$ given \eqref{B2}.

Taking into account inequalities \eqref{ecdif}, \eqref{ecH1} and \eqref{ecH2} we obtain
\begin{equation}
  2 \alpha_{01} M^*|\alpha_0-\tilde{\alpha_0}| \leq    M^*|\alpha_0-\tilde{\alpha_0}||\alpha_0+\tilde{\alpha_0}|\leq (D^* B_1+B_2) |\alpha_0-\tilde{\alpha_0}|
\end{equation}
It follows that
\begin{equation}
   \left( 2\alpha_{01}M^* - D^* B_1-B_2\right)|\alpha_0-\tilde{\alpha_0}| \leq   0
\end{equation}
By \eqref{cotaM*} it results that $\alpha_0=\tilde{\alpha_0}$ and therefore we obtain uniqueness of solution to equation \eqref{ealpha0}.

\end{proof}

\begin{theorem}
    Suppose \eqref{e24}, \eqref{e36}-\eqref{e43}, \eqref{hipQ0}, \eqref{A1}, \eqref{A2} and \eqref{cotaM*} hold, then there exists a unique solution $(\theta_1(r,t),\;\theta_2(r,t), \alpha(t))$ to the problem \eqref{e1}-\eqref{e7}, where
    \begin{equation*}
        \theta_1(r,t)=\theta_m f_1^{\alpha_0^*}\left(\dfrac{r^2}{4\alpha_0^{*^2} t}\right)+\theta_m,\quad 0\leq r\leq \alpha(t),\quad t>0,
    \end{equation*}
    \begin{equation*}
        \theta_2(r,t)=\theta_m f_2^{\alpha_0^*}\left(\dfrac{r}{4\alpha_0^{*^2} t}\right)+\theta_m,\quad r\geq \alpha(t),\quad t>0,
    \end{equation*}
    \begin{equation*}
        \alpha(t)=2a\alpha_0^*\sqrt{t}, \quad t>0,
    \end{equation*}
and $(f_1^{\alpha_0^*},f_2^{\alpha_0^*},\alpha_0^*)$ is the unique solution of \eqref{e25},\eqref{e26} and \eqref{e31}.
\end{theorem}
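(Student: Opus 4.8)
The plan is to assemble the results already proved and then invert, one after another, the two changes of variables that reduced the Stefan problem \eqref{e1}-\eqref{e7} to the integral system, so that only bookkeeping and endpoint verifications remain. First I would collect the existence and uniqueness statements at the level of the reduced problem. Under \eqref{e36}-\eqref{e43} and \eqref{hipQ0}, Theorem \ref{teo:existfixedpoint} together with its Corollary gives, for each fixed $\alpha_0\in(\alpha_{01},\alpha_{02})$, a unique pair $(f_1^{\alpha_0},f_2^{\alpha_0})\in\mathcal{K}$ solving \eqref{e25}-\eqref{e26}. Theorem \ref{thmExistenciaalpha0}, under \eqref{hipPhi1Phi2}, yields at least one $\alpha_0^*\in(\alpha_{01},\alpha_{02})$ solving \eqref{ealpha0}, and the uniqueness theorem, under \eqref{A1}, \eqref{A2} and \eqref{cotaM*}, shows this $\alpha_0^*$ is the only such value. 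Combining these, there is exactly one triple $(f_1^{\alpha_0^*},f_2^{\alpha_0^*},\alpha_0^*)$ satisfying \eqref{e25}, \eqref{e26} and \eqref{e31} simultaneously.

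Next I would establish that this triple solves the ordinary differential problem \eqref{e18}-\eqref{e23}, i.e. that the integral formulation is equivalent to the ODE boundary value problem. Differentiating \eqref{e25}-\eqref{e26} and using the definitions \eqref{e27}-\eqref{e30} recovers the conservation-form equations \eqref{e18}-\eqref{e19}; writing $F_1,F_2$ as integrals from the appropriate endpoint forces the matching conditions \eqref{e21} and the far-field condition \eqref{e23}, while the behaviour of $E_1$ near $\eta=0$ together with the normalisation constant $D^*$ reproduces the axis heat-source condition \eqref{e20}. The extra scalar relation \eqref{e31}, which defines \eqref{ealpha0}, is precisely the Stefan condition \eqref{e22} rewritten through $F_2(\alpha_0,\infty,\cdot)$ and $E_1(0,\alpha_0,\cdot)$. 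With \eqref{e18}-\eqref{e23} in hand, I would reverse the similarity ansatz \eqref{e16}-\eqref{e17}: setting $T_i(r,t)=f_i^{\alpha_0^*}(\eta)$ with $\eta$ as in \eqref{e16} and $\alpha(t)=2a\alpha_0^*\sqrt t$, the chain rule turns \eqref{e18}-\eqref{e23} back into \eqref{e9}-\eqref{e15}; finally the affine map \eqref{e8}, $\theta_i=\theta_m T_i+\theta_m$, returns a solution of the original problem \eqref{e1}-\eqref{e7}. The bounds of Lemma \ref{lem1} supply the regularity and the limiting behaviour (as $r\to0$ and $r\to\infty$) needed for these manipulations to be legitimate.

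For uniqueness I would argue that each of the two transformations is a bijection between the corresponding solution classes, so that uniqueness of $(f_1^{\alpha_0^*},f_2^{\alpha_0^*},\alpha_0^*)$ propagates to uniqueness of $(\theta_1,\theta_2,\alpha)$ within the class of self-similar solutions of the form \eqref{e16}-\eqref{e17}. The main obstacle I anticipate is not the algebra of the chain rule but the careful verification of the equivalence in the delicate limits: showing that the singular axis condition \eqref{e20}, hence \eqref{e3}, is genuinely recovered from the integral representation \eqref{e25} as $\eta\to0$, where $L^*(f_1)$ is only controlled by \eqref{e36} with the negative exponent $-\mu$, and that $F_2(\alpha_0,\infty,f_2)$ is finite and strictly positive so that \eqref{e31} is well posed, which relies on the standing assumption $\beta>\sigma+2$. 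Making these endpoint arguments rigorous, rather than the formal reduction, is where the real work lies.
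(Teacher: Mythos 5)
Your proposal is correct and follows essentially the same route as the paper, which gives no explicit proof of this final theorem precisely because it is the assembly of Theorem \ref{teo:existfixedpoint}, its Corollary, Theorem \ref{thmExistenciaalpha0} and the uniqueness theorem for $\alpha_0$, followed by undoing the similarity substitution \eqref{e16}--\eqref{e17} and the dimensionless change \eqref{e8}. One point in your favour: you correctly note that existence of $\alpha_0^*$ rests on Theorem \ref{thmExistenciaalpha0} and hence on hypothesis \eqref{hipPhi1Phi2}, which the theorem statement itself omits from its list of assumptions.
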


\section{Conclusions}
In conclusion, we have successfully addressed the problem of heat conduction in a one-dimensional domain with nonlinear thermal coefficients and a heat source on the axis $z=0$. Through the use of similarity transformations, we transformed the problem into an ordinary differential one, simplifying its analysis. We then introduced an equivalent system of nonlinear integral equations, which provided a more tractable framework for finding a solution.

Our most significant achievement in this study was the establishment of the existence and uniqueness of the solution, a result obtained through the application of the fixed point Banach theorem. This outcome has important implications for the field of heat transfer and nonlinear partial differential equations.

In summary, our research has provided a systematic approach to solving the Stefan problem in the given domain, and our results contribute to the broader understanding of heat conduction in nonlinear systems. We hope that this work will be a valuable contribution to the field and serve as a foundation for further research and practical applications in thermal engineering and related disciplines.

The outcomes of this research have significant implications for the understanding and control of heat-related processes, with potential applications in various domains. While the study has brought us closer to comprehending the underlying dynamics, we acknowledge the existence of certain limitations and opportunities for future research.

\section*{Acknowledgments}
T.A. Nauryz and S.N. Kharin were supported by grant project
AP19675480 ‘‘Problems of heat conduction with a free boundary
arising in modeling of switching processes in electrical devices’’ from
Ministry of Sciences and Higher Education of the Republic of Kazakhstan.

\section*{References}

%\bibliographystyle{elsarticle-num} 
%\bibliography{Biblio}

%\begin{thebibliography}{00}

%% \bibitem[Author(year)]{label}
%% Text of bibliographic item

%\bibitem[ ()]{}

%\end{thebibliography}
\end{document}